\begin{document}
\title[Fractional Schrodinger Equations]
{Concentration phenomenon for fractional nonlinear Schr\"{o}dinger equations}
\author{Guoyuan Chen}
\address{School of Mathematics and Statistics, Zhejiang University of Finance \& Economics, Hangzhou 310018, Zhejiang, P. R. China}
\email{gychen@zufe.edu.cn}
\author{Youquan Zheng$^{\dag}$}
\address{School of Science, Tianjin University, Tianjin 300072, P. R. China.}
\email{zhengyq@tju.edu.cn}
\newcommand{\optional}[1]{\relax}
\setcounter{secnumdepth}{3}
\setcounter{section}{0} \setcounter{equation}{0}
\numberwithin{equation}{section}
\newcommand{\MLversion}{1.1}
\thanks{$\dag$ The first author was partially supported by Zhejiang Provincial Natural Science
Foundation of China (LQ13A010003). The second author was partially
supported by NSFC of China (11271200).}
%\subjclass[2000]{Primary 58J37; Secondary 58J40; 58J50; 58J05}
\keywords{fractional nonlinear Schr\"{o}dinger equation, concentration solutions, Lyapunov-Schmidt reduction}
%\date{\today}
\begin{abstract}
We study the concentration phenomenon for solutions of the fractional nonlinear Schr\"{o}dinger equation, which is nonlocal. We mainly use the Lyapunov-Schmidt reduction method. Precisely, consider the nonlinear equation
\begin{equation}\label{e:abstract}
(-\varepsilon^2\Delta)^sv+Vv-|v|^{\alpha}v=0\quad\mbox{in}\quad\mathbf R^n,
\end{equation}
where $n =1, 2, 3$, $\max\{\frac{1}{2}, \frac{n}{4}\}< s < 1$, $1 \leq \alpha < \alpha_*(s,n)$, $V\in C^3_{b}(\mathbf{R}^n)$. Here the exponent $\alpha_*(s,n)=\frac{4s}{n-2s}$ for $0 < s < \frac{n}{2}$ and $\alpha_*(s,n)=\infty$ for $s \geq\frac{n}{2}$. Then for each non-degenerate critical point $z_0$ of $V$, there is a nontrivial solution of equation (\ref{e:abstract}) concentrating to $z_0$ as $\varepsilon\to 0$.
\end{abstract}
\maketitle

%\markboth{Version: \MLversion, \today}{Version: \MLversion, \today}

\section{Introduction}
We mainly consider the fractional Schr\"{o}dinger equation in this paper.

The Schr\"{o}dinger equation, often called the Schr\"{o}dinger wave equation, is a fundamental equation of quantum mechanics which describes how the wave function of a physical system evolves over time. This equation is not derived from a conical set of axioms. For example, Schr\"{o}dinger himself, arrived at this equation by inserting de Broglies relation into a classical wave equation. Another attempt to derive Schr\"{o}dinger equation from classical physics was using Nelson's stochastic theory \cite{Nelson1985}. Hall and Reginatta \cite{HallReginatto2002} showed that the Schrodinger equation can be derived from the exactly uncertainty principle. It is also well known that Feynman and Hibbs used path integrals over Brownian paths to derive
the standard Schr\"{o}dinger equation \cite{FeynmanHibbs2010}.

It is worthwhile to mention that the above treatments leads to the regular kind of Schr\"{o}dinger equations which are solved by the regular calculus (integro and differential equation). Fractional calculus generalized the classical calculus. Fractional spaces and the corresponding nonlocal equations have many important applications in various fields of science and engineering. For example, the thin obstacle problem \cite{MilakisSilvestre08, Silvestre07}, optimization \cite{DuvautLions76}, finance \cite{ContTankov04}, phase transition \cite{AlbertBouchittSeppecher98, CabreMorales2005, SireValdinoci2009, FarinaValdinici2011, GarroniPalatucci2006}, stratified materials \cite{SavinValdinicci2009}, anomalous diffusion \cite{MetzlerKlafter2000}, crystal dislocation \cite{JFToland1997, GonzalezMonneau2012}, soft thin films \cite{Kurzke2006}, semipermeable membranes and flame propagation \cite{CaffarelliMelletSire2012}, conservation laws \cite{BilerKarchWoyczyski2001}, ultra-relativistic limits of quantum mechanics \cite{FeffermanDelallave1986}, quasi-geostrophic flows \cite{MajdaTabak1996, Cordoba1998}, multiple scattering \cite{DuistermaatGuillemin1975, ColtonKress1998, GroteKirsch2004}, minimal surfaces \cite{CaffarelliRoquejoffreSavin, CaffarelliValdinoci2011}, conformal geometry \cite{ChangGonzalez2011}, material science \cite{Bates2006} and water waves \cite{stokerJJ1957, whiteham1974, CraigSulem1992, CraigGroves1994, CraigNicholls2000, CraigWorfolk1995, CraigSchanzSulem1997, DelallavePanayotaros1996, DellallaveValdinoci2009, GachterGrote2003, HuNicholls2005, NichollsTaber2008, NaumkinShishmar1994} and so on.

In quantum physics, the Feynman path integral
approach to quantum mechanics was the first successful attempt applying the fractality concept
that was first introduced by Mandelbrot \cite{Mandelbrot1982}. Recently, Laskin extended the fractality concept and
formulated fractional quantum mechanics as a path integral over the L\'{e}vy flights paths \cite{Laskin2000a, Laskin2000b, Laskin2000c, Laskin2002}. Through
introducing the quantum Riesz fractional derivative, he constructed the space fractional Schr\"{o}dinger equation
\begin{equation*}
i\hbar\frac{\partial\psi}{\partial t}=D_{\alpha}(-\hbar^2\Delta)^{\frac{\alpha}{2}}\psi+V(x, t)\psi.
\end{equation*}
Laskin showed the hermiticity of the fractional Hamilton
operator and established the parity conservation law. Energy spectra of a hydrogen like atom and of a fractional oscillator were also computed.

Mathematically, $(-\Delta)^s$ is defined as
$$
(-\Delta)^s u = C(n, s)\mbox{P. V.} \int_{\mathbf{R}^n}\frac{u(x) - u(y)}{|x - y|^{n + 2s}}dy = C(n, s)\lim_{\varepsilon\to 0^+}\int_{B_{\varepsilon}^c(x)}\frac{u(x) - u(y)}{|x - y|^{n + 2s}}dy.
$$
Here P. V. is a commonly used abbreviation for `in the principal value sense' and $C(n, s) = \pi^{-(2s + n/2)}\frac{\Gamma(n/2 + s)}{\Gamma(-s)}$.
It is well known that $(-\Delta)^s$ on $\mathbf{R}^{n}$ with $s\in (0, 1)$ is a nonlocal operator. In the remarkable work of Caffarelli and Silvestre \cite{Caffarelli&Silvestre07}, the authors express this nonlocal operator as a generalized Dirichlet-Neumann map for a certain elliptic boundary value problem with local differential operators defined on the upper half-space $\mathbf{R}^{n+1}_{+} = \{(x, t): x\in\mathbf{R}^{n}, t > 0\}$. That is, given a solution $u = u(x)$ of $(-\Delta)^s u = f$ in $\mathbf{R}^{n}$, one can equivalently consider the dimensionally extended problem for $u = u(x, t)$, which solves
\begin{equation}\label{e:dirichletneumann}
\begin{cases} \text{div}(t^{1 -2s}\nabla u) = 0 , &  \text{ in }\mathbf{R}^{n+1}_{+},\\
-d_{s}t^{1 -2s}\partial_{t}u|_{t\to 0} = f, & \text{ on $\partial\mathbf{R}^{n+1}_{+}$}.
\end{cases}
\end{equation}
Here the positive constant $d_{s} > 0$ is explicitly given by
$$
d_{s} = 2^{2s -1}\frac{\Gamma(s)}{\Gamma(1 - s)}.
$$
The formulation (\ref{e:dirichletneumann}) in terms of local differential operators plays a central role when deriving bounds on the number of sign changes for eigenfunctions of fractional Schr\"{o}dinger operators $H = (-\Delta)^s + V$. Using this idea, \cite{Frank&LenzmannActaMath} and \cite{FrankLenzmann&Silvestre}
obtain certain sharp oscillation bounds for eigenfunctions of $H$. In these two papers, the authors proved the uniqueness and nondegeneracy ground state solutions $0\leq u_0 = u_0(|x|)\in H^{s}(\mathbf{R}^n)$ for the nonlinear problem
\begin{equation}\label{e:limit}
(-\Delta)^su + u = |u|^{\alpha}u\quad\mbox{in}\quad \mathbf{R}^n,
\end{equation}
which settled a conjecture by \cite{KenigMartelRobbiano2011, WeinsteinCPDE87} and generalized a classical result by Amick and Toland \cite{Amick&Toland91} on the uniqueness of solitary waves for the Benjamin-Ono equation. If $s = 1$, the uniqueness and nondegeneracy for ground states of (\ref{e:limit}) was due to \cite{KwongARMA1989}.

In this paper, we consider fractional Schr\"{o}dinger equation
\begin{equation}\label{e:evolutionEqn}
i\varepsilon \frac{\partial\psi}{\partial t}=(-\varepsilon^2\Delta)^s\psi+V\psi-\gamma|\psi|^{\alpha}\psi \quad\mbox{in}\quad\mathbf R^n,
\end{equation}
where $\varepsilon$ is the sufficiently small positive constant which is corresponding to the Plank constant, $\gamma>0$, $n\ge 1$, $s\in (0,1)$, $\alpha\in(0,\alpha_*(s,n))$. The exponent $\alpha_*(s,n)$ satisfies
\begin{equation}
\alpha_*(s,n)=\begin{cases} \frac{4s}{n-2s} , &  \text{ for }0<s<\frac{n}{2},\\
+\infty, & \text{ for $s\ge \frac{n}{2}$}.
\end{cases}
\end{equation}

We shall seek the so-called solitary waves which is of form
\begin{equation}\label{e:standingwave}
\psi(x,t)=e^{(-i/\varepsilon)Et}v(x),
\end{equation}
where $v$ is real-valued and $E$ is some constant in $\mathbf R$. (\ref{e:standingwave}) solves (\ref{e:evolutionEqn}) provided the standing wave $v(x)$ satisfy the nonlinear eigenvalue equation
\begin{equation}\label{e:stationally}
(-\varepsilon^2\Delta)^sv+Vv-\gamma |v|^{\alpha}v=Ev\quad\mbox{in}\quad\mathbf R^n,
\end{equation}
For simplicity of the notation, we shall assume that $\gamma=1$, so Equation (\ref{e:stationally}) is reduced to
\begin{equation}
(-\varepsilon^2\Delta)^sv+(V-E)v- |v|^{\alpha}v=0\quad\mbox{in}\quad\mathbf R^n,
\end{equation}
Since we assume that $V\in C^3_{b}(\mathbf{R}^n)$, where
\begin{equation*}
C_{b}^3(\mathbf{R}^n) : = \{u\in C^3(\mathbf{R}^n): D^J u \mbox{\,\,is bounded on\,\,} \mathbf{R}^n \mbox{\,\,for\,\,} |J|\leq 3\}
\end{equation*}
with norm
$\|u\|_{C^3_{b}(\mathbf{R}^n)} = \displaystyle\max_{0\leq |J|\leq 3}\sup_{x\in \mathbf{R}^n}|D^J(x)|$, by a suitable choice of $E$ we can assume that $V-E$ is positive. Finally, we have the following equation
\begin{equation}\label{e:main}
(-\varepsilon^2\Delta)^sv+Vv-|v|^{\alpha}v=0\quad\mbox{in}\quad\mathbf R^n,
\end{equation}
where $V$ is a positive function.

The main result of this paper is
\begin{theorem}\label{t:main}
Assume that $n =1, 2, 3$, $\max\{\frac{1}{2}, \frac{n}{4}\}< s < 1$, $1 \leq \alpha < \alpha_{*}(s, n)$, $V\in C^3_{b}(\mathbf{R}^n)$. Then for each non-degenerate critical point $z_0$ of $V$, there is an $\varepsilon_0>0$ such that, for all $\varepsilon\in (0,\varepsilon_0)$, equation (\ref{e:main}) has a nontrivial solution $v_\varepsilon$ concentrating to $z_0$ as $\varepsilon\to 0$. More precisely,
$v_\varepsilon$ takes the form
\begin{equation}
v_\varepsilon(x) = u_{0}\left(\frac{x - z_\varepsilon}{\varepsilon}\right) + \phi_{z_\varepsilon,\varepsilon}
\end{equation}
with $z_\varepsilon \to z_0$, $\phi_{z_\varepsilon,\varepsilon}\to 0$ in $H^{2s}(\mathbf{R}^n)$ and hence in $C^0_{b}(\mathbf{R}^n)$ as $\varepsilon\to 0$, and $u_{0}$ is the unique positive radial ground state solution of (\ref{e:limit}).
\end{theorem}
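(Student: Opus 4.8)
The plan is to carry out a Lyapunov--Schmidt (finite-dimensional) reduction. The nonlocal operator $(-\varepsilon^2\Delta)^s$ will be realized, via the Caffarelli--Silvestre extension (\ref{e:dirichletneumann}), as a local degenerate-elliptic operator on the half-space $\mathbf{R}^{n+1}_{+}$, and the crucial analytic input is the uniqueness and \emph{nondegeneracy} of the ground state $u_0$ of (\ref{e:limit}) established in \cite{Frank&LenzmannActaMath, FrankLenzmann&Silvestre}. After the change of variables $x = \varepsilon y + \xi$, with $\xi$ near $z_0$ a tentative concentration point, equation (\ref{e:main}) becomes
\[
(-\Delta)^s u + V(\varepsilon y + \xi)\,u - |u|^\alpha u = 0 \quad\text{in}\quad \mathbf{R}^n,
\]
whose formal limit $(-\Delta)^s u + V(\xi)u - |u|^\alpha u = 0$ is solved by the rescaled ground state $U_{V(\xi)}(y) := V(\xi)^{1/\alpha}\,u_0\bigl(V(\xi)^{1/(2s)}y\bigr)$. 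I would take the approximate solution $W_\xi$ to be the extension of $U_{V(\xi)}$, working in the weighted Sobolev space $\dot H^s(\mathbf{R}^{n+1}_{+},t^{1-2s})$ with traces in $H^s(\mathbf{R}^n)$; the corresponding boundary error is $\mathcal{E}_\xi = \bigl(V(\varepsilon y + \xi) - V(\xi)\bigr)\,U_{V(\xi)}(y)$, which is small because $V\in C^3_b(\mathbf{R}^n)$ and $u_0$ has polynomial decay of order $|y|^{-(n+2s)}$.

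Next I would set up the linear theory. The linearization of the extended problem about $W_\xi$ is an operator $L_\xi$ whose boundary part acts by $\phi\mapsto(-\Delta)^s\phi + V(\xi)\phi - (\alpha+1)|U_{V(\xi)}|^{\alpha}\phi$. By the nondegeneracy theorem its kernel in $H^s(\mathbf{R}^n)$ is exactly $K_\xi := \mathrm{span}\{\partial_{y_1}U_{V(\xi)},\dots,\partial_{y_n}U_{V(\xi)}\}$, so $L_\xi$ is invertible, with operator norm bounded uniformly for $\xi$ in a fixed neighborhood of $z_0$, on the $L^2$-orthogonal complement $K_\xi^{\perp}$. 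Writing $v = W_\xi + \phi$ and projecting the equation onto $K_\xi^{\perp}$, a contraction-mapping (or implicit function theorem) argument produces, for each small $\varepsilon$ and each such $\xi$, a unique small $\phi = \phi_{\xi,\varepsilon}\in K_\xi^{\perp}$ depending in a $C^1$ way on $\xi$, with $\|\phi_{\xi,\varepsilon}\|_{H^{2s}}\to 0$. Here the smallness of $\mathcal{E}_\xi$ and the uniform invertibility of $L_\xi$ are combined with the facts that $t\mapsto|t|^{\alpha}t$ is $C^1$ (this uses $\alpha\ge 1$) and that $H^{2s}(\mathbf{R}^n)\hookrightarrow C^0_b(\mathbf{R}^n)$ (this uses $2s>n/2$, hence $n\le 3$ and $\max\{\tfrac12,\tfrac n4\}<s<1$) in order to control the quadratic remainder of the nonlinearity.

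It then remains to choose $\xi$ so that the $K_\xi$-component of the equation vanishes, i.e.\ to solve the $n$ bifurcation equations $c_i(\xi,\varepsilon)=0$. By the variational structure of (\ref{e:main}) this is equivalent to finding a critical point of the reduced functional $J_\varepsilon(\xi) := I_\varepsilon\bigl(W_\xi + \phi_{\xi,\varepsilon}\bigr)$, where $I_\varepsilon$ denotes the energy associated with the rescaled equation. An expansion, using $\|\phi_{\xi,\varepsilon}\|_{H^{2s}}\to0$ and its $C^1$-dependence on $\xi$, gives $J_\varepsilon(\xi) = \Phi\bigl(V(\xi)\bigr) + o(1)$ uniformly in $C^1$ on a neighborhood of $z_0$, where $\Phi$ is the explicit, strictly monotone function expressing the energy of $U_a$ in terms of $a$; hence $\nabla_\xi J_\varepsilon(\xi) = \Phi'\bigl(V(\xi)\bigr)\nabla V(\xi) + o(1)$. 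Since $z_0$ is a \emph{non-degenerate} critical point of $V$, the vector field $\xi\mapsto\Phi'(V(\xi))\nabla V(\xi)$ is nondegenerate at $z_0$ and has nonzero Brouwer degree on a small ball about it; stability of the degree under the $o(1)$ perturbation then yields, for all sufficiently small $\varepsilon$, a critical point $\xi = z_\varepsilon\to z_0$ of $J_\varepsilon$, and undoing the rescaling gives a solution $v_\varepsilon$ of the stated form.

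The main obstacle, compared with the classical case $s=1$, is the interplay between the nonlocality and the merely \emph{polynomial} decay of $u_0$: this makes the error estimate for $\mathcal{E}_\xi$, the uniform invertibility of $L_\xi$ on $K_\xi^{\perp}$, and --- most delicately --- the $C^1$-dependence of $\phi_{\xi,\varepsilon}$ on $\xi$ (needed for the $C^1$ expansion of $J_\varepsilon$) substantially harder than in the local setting, and it is precisely to make the requisite weighted regularity theory for the extension problem, the Sobolev embeddings, and the smoothness of the nonlinearity available that the restrictions $s>\tfrac12$, $s>\tfrac n4$, $n\le3$ and $\alpha\ge1$ are imposed. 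I expect the bulk of the work to lie in this quantitative linear step; the concluding degree-theoretic argument is then routine.
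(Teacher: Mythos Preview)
Your outline is correct and would yield the theorem, but the paper takes a noticeably different route at almost every technical choice. First, the paper never uses the Caffarelli--Silvestre extension; it works directly in $H^{2s}(\mathbf R^n)$ with $(-\Delta)^s$ as a Fourier multiplier. Second, after normalizing $V(z_0)=1$, the paper's approximate solution is simply the translated ground state $u_{z,\varepsilon}(y)=u_0(y-z/\varepsilon)$, not your $V(\xi)$-rescaled profile $U_{V(\xi)}$; the error is then $(V_\varepsilon-1)u_{z,\varepsilon}$. Third, and most significantly, the paper does \emph{not} pass through a reduced energy $J_\varepsilon$ and does not need the $C^1$-dependence of $\phi_{\xi,\varepsilon}$ on $\xi$ that you flag as the hardest step. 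Instead it projects the equation directly onto $K_{z,\varepsilon}$ to obtain a bifurcation map $s_\varepsilon(z)_j=\langle S_\varepsilon(u_{z,\varepsilon}+\phi_{z,\varepsilon}),\partial_j u_{z,\varepsilon}\rangle$, splits this as $e_1+e_2+e_3$, and shows by explicit integral estimates---using only the size bound $\|\phi_{z,\varepsilon}\|_{2s}\le\Theta\|S_\varepsilon(u_{z,\varepsilon})\|_0$, no differentiability in $z$---that the rescaled map $v_\varepsilon(z)=\varepsilon^{-\nu}s_\varepsilon(\varepsilon^\nu z)$, for a carefully chosen exponent $\tfrac13<\nu<\min\bigl(\tfrac{3(n+4s)-4}{(n+4s)+4},\tfrac{2(n+4s)-2}{(n+4s)+2}\bigr)$, converges uniformly on $B_1$ to the linear map $v_0(z)=-\tfrac12\|u_0\|_0^2\,D^2V(0)z$; Brouwer then gives the zero. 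In short, yours is the Ambrosetti--Malchiodi style variational reduction (more flexible, e.g.\ it adapts readily to stable but possibly degenerate critical points of $V$, at the price of the $C^1$-in-$\xi$ analysis), while the paper follows the original Floer--Weinstein direct expansion of the bifurcation function (leaner, avoids both the extension and the $C^1$ dependence, but leans more heavily on the nondegeneracy of $D^2V(z_0)$ and on the ad hoc scaling $\varepsilon^\nu$).
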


If $s = 1$, (\ref{e:main}) is the classical Schr\"{o}dinger equation and the corresponding result of Theorem \ref{t:main} was established by Floer and Weinstein in \cite{Floer86}. There are large mounts of research on this equation in the past two decades. We limit ourselves to citing a
few recent papers \cite{AMS01, AmbrosettiMalchiodiNi2003, BahriLi1990, BahriLions1997, DelpinoFelmer1996, ByeonWangARMA2002, Grossi2002, LiYanyan1997, DelpinoWeijunchengCPAM}, referring to their bibliography for a broader
list of works, although still not exhaustive.

In the case $s\in (0, 1)$, the operator $(-\Delta)^s$ on $\mathbf{R}^{n}$ is nonlocal, while $-\Delta$ is local. As pointed out in \cite{Secchi2013}, when studying the singularly perturbed equation (\ref{e:main}), the standard techniques that were developed for the local laplacian do not work out-of-the-box since these techniques heavily rely on blow-up and local estimates, and need fine properties of solutions to the limiting problem. Moreover, $(-\Delta)^s$ may kill bumps by averaging on the whole space. Fortunately, based on the work of \cite{Frank&LenzmannActaMath}, \cite{FrankLenzmann&Silvestre} and by carefully using the cut-off function technique, we can recover the main ingredients of the Lyapunov-Schmidt reduction method in the fractional case (although the ground bound state got in \cite{FrankLenzmann&Silvestre} not decay exponentially, the speed of decay is enough for us to obtain the estimate).

The rest of this paper is organized as follows. In Section 2, we recall the notations of fractional Laplacian and some known results, especially the uniqueness and nondegenerace results of \cite{Frank&LenzmannActaMath} and \cite{FrankLenzmann&Silvestre}. In Section 3, we prove the invertibility of the linearized operator at the ground state solution. In Section 4 and 5, we prove the main result of this paper by the Lyapunov-Schmidt reduction method.

\section{Preliminaries}
In this section, we recall some properties of the fractional order Sobolev spaces and the results of \cite{Frank&LenzmannActaMath}, \cite{FrankLenzmann&Silvestre} which are crucial in our proof of the main theorem.
\subsection{Fractional order Sobolev spaces}
In this subsection, we recall some useful facts of the fractional order Sobolev spaces. For more details, please see, for example, \cite{AdamsSobolevSpace}, \cite{Shubin01}, \cite{NezzaPalatucci&Valdinoci}.

Consider the Schwartz space $\mathcal{S}$ of rapidly decaying $C^{\infty}$ functions on $\mathbf{R}^{n}$. The topology of this space is generated by the seminorms
$$
p_{N}(\varphi) = \displaystyle\sup_{x\in\mathbf{R}^{n}}(1 + |x|)^{N}\sum_{|\alpha|\leq N}|D^{\alpha}\varphi(x)|,\quad N = 0, 1, 2,\cdot\cdot\cdot,
$$
where $\varphi\in \mathcal{S}$. Let $\mathcal{S}'$ be the set of all tempered distributions, which is the topological dual of $\mathcal{S}$. As usual, for any $\varphi\in \mathcal{S}$, we denote by
$$
\mathcal{F}\varphi(\xi) = \frac{1}{(2\pi)^{n/2}}\int_{\mathbf{R}^{n}}e^{-i\xi\cdot x}\varphi(x)dx
$$
the Fourier transformation of $\varphi$ and we recall that one can extend $\mathcal{F}$ from $\mathcal{S}$ to $\mathcal{S}'$.

When $s\in (0, 1)$, the space $H^{s}(\mathbf{R}^{n}) = W^{s, 2}(\mathbf{R}^n)$ is defined by
\begin{eqnarray*}
H^{s}(\mathbf{R}^{n})& = &\left\{u\in L^2(\mathbf{R}^2): \frac{|u(x) - u(y)|}{|x - y|^{\frac{n}{2} + s}}\in L^{2}(\mathbf{R}^n\times\mathbf{R}^n)\right\}\\
& = & \left\{u\in L^2(\mathbf{R}^2): \int_{\mathbf{R}^n}(1 + |\xi|^{2s})|\mathcal{F}u(\xi)|^2d\xi < +\infty\right\}
\end{eqnarray*}
and the norm is
\begin{eqnarray*}
\|u\|_{s} := \|u\|_{H^{s}(\mathbf{R}^{n})}& = &\left(\int_{\mathbf{R}^n}|u|^2dx + \int_{\mathbf{R}^n}\int_{\mathbf{R}^n}\frac{|u(x) - u(y)|^2}{|x - y|^{n + 2s}}dxdy\right)^\frac{1}{2}.
\end{eqnarray*}
Here the term
$$
[u]_{s} := [u]_{H^{s}(\mathbf{R}^{n})} = \left(\int_{\mathbf{R}^n}\int_{\mathbf{R}^n}\frac{|u(x) - u(y)|^2}{|x - y|^{n + 2s}}dxdy\right)^\frac{1}{2}
$$
is the so-called Gagliardo (semi) norm of $u$. The following identity yields the relation between the fractional operator $(-\Delta)^s$ and the fractional Sobolev space $H^{s}(\mathbf{R}^{n})$,
$$
[u]_{H^{s}(\mathbf{R}^{n})} = C\left(\int_{\mathbf{R}^n}|\xi|^{2s}|\mathcal{F}u(\xi)|^2d\xi\right)^{\frac{1}{2}} = C\|(-\Delta)^{\frac{s}{2}}u\|_{L^2(\mathbf{R}^n)}
$$
for a suitable positive constant $C$ depending only on $s$ and $n$.

When $s > 1$ and it is not an integer we write $s = m + \sigma$, where $m$ is an integer and $\sigma\in (0, 1)$. In this case the space $H^{s}(\mathbf{R}^{n})$
consists of those equivalence classes of functions $u\in H^{m}(\mathbf{R}^{n})$ whose distributional derivatives $D^{J} u$, with $|J| = m$, belong to $H^{\sigma}(\mathbf{R}^{n})$, namely
\begin{eqnarray*}
H^{s}(\mathbf{R}^{n}) = \left\{u\in H^{m}(\mathbf{R}^{n}): D^{J} u\in H^{\sigma}(\mathbf{R}^{n}) \mbox{\,\,for any\,\,}J \mbox{\,\,with\,\,} |J| = m\right\}
\end{eqnarray*}
and this is a Banach space with respect to the norm
\begin{eqnarray*}
\|u\|_{s} := \|u\|_{H^{s}(\mathbf{R}^{n})} = \left(\|u\|^2_{H^{m}(\mathbf{R}^{n})} + \displaystyle\sum_{|J| = m}\|D^{J} u\|^2_{H^{\sigma}(\mathbf{R}^{n})}\right)^\frac{1}{2}.
\end{eqnarray*}
Clearly, if $s = m$ is an integer, the space $H^{s}(\mathbf{R}^{n})$ coincides with the usual Sobolev space $H^{m}(\mathbf{R}^{n})$.

For a general domain $\Omega$, the space $H^s(\Omega)$ can be defined similarly.

On the Sobolev inequality and the compactness of embedding, one has
\begin{theorem}\cite{AdamsSobolevSpace}\label{l:embedding}
Let $\Omega$ be a domain with smooth boundary in $\mathbf{R}^n$. Let $s > 0$, then
\begin{enumerate}
\item[(a)]If $n > 2s$, then $H^{s}(\Omega)\to L^{r}(\Omega)$ for $2\leq r \leq 2n/(n - 2s)$,
\item[(b)]If $n = 2s$, then $H^{s}(\Omega)\to L^{r}(\Omega)$ for $2\leq r < \infty$,
\item[(c)]If $n < 2(s - j)$ form some nonnegative integer $j$, then $H^{s}(\Omega)\to C_{b}^j(\Omega)$.
\end{enumerate}
\end{theorem}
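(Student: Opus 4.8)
The plan is to reduce the three embeddings to the model case $\Omega=\mathbf{R}^n$ by means of a bounded extension operator, and then to read everything off from the Fourier characterization of $H^s(\mathbf{R}^n)$, using the Hardy--Littlewood--Sobolev inequality as the one substantial external ingredient. First I would invoke the existence of a bounded linear extension operator $E\colon H^s(\Omega)\to H^s(\mathbf{R}^n)$ with $(Eu)|_\Omega=u$; since $\partial\Omega$ is smooth this is classical (for integer $s$ a Stein-type reflection, and for non-integer $s=m+\sigma$ one extends in $H^m$ in such a way that the top-order derivatives land in $H^\sigma$, or one interpolates). Because the restriction map $H^s(\mathbf{R}^n)\to H^s(\Omega)$ is trivially bounded, and $L^r(\Omega)$ and $C^j_b(\Omega)$ are obtained by restriction from $L^r(\mathbf{R}^n)$ and $C^j_b(\mathbf{R}^n)$, it is enough to prove (a), (b), (c) on $\mathbf{R}^n$ and then precompose with $E$. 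On $\mathbf{R}^n$ I will freely use that $\|u\|_{H^s}^2\asymp\int_{\mathbf{R}^n}(1+|\xi|^2)^s|\mathcal Fu(\xi)|^2\,d\xi$, which is built into the definition of the space in the Preliminaries.

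For part (a), assume $n>2s$, hence $0<s<n/2$. Set $g=\mathcal F^{-1}\bigl(|\xi|^s\,\mathcal Fu\bigr)$, so that $\|g\|_{L^2}=\bigl\||\xi|^s\mathcal Fu\bigr\|_{L^2}\le\|u\|_{H^s}$ (using $|\xi|^s\le(1+|\xi|^2)^{s/2}$), and observe that $u=c_{n,s}\,I_s g$, where $I_s$ is the Riesz potential of order $s$ (Fourier multiplier $c\,|\xi|^{-s}$). The Hardy--Littlewood--Sobolev inequality gives $\|I_s g\|_{L^{2^{*}_s}(\mathbf{R}^n)}\le C\|g\|_{L^2(\mathbf{R}^n)}$ with $2^{*}_s=2n/(n-2s)$, whence $\|u\|_{L^{2^{*}_s}}\le C\|u\|_{H^s}$. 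Combining this with the trivial bound $\|u\|_{L^2}\le\|u\|_{H^s}$ and Hölder's inequality (interpolating between the exponents $2$ and $2^{*}_s$) yields $H^s(\mathbf{R}^n)\hookrightarrow L^r(\mathbf{R}^n)$ for every $r\in[2,2^{*}_s]$, which is (a). Note that this works in one shot for every $s\in(0,n/2)$, integer or not, so no separate iteration is needed.

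For part (b), $n=2s$. After the extension step I argue directly in Fourier variables: write $\mathcal Fu=\bigl[(1+|\xi|^2)^{s/2}\mathcal Fu\bigr]\cdot(1+|\xi|^2)^{-s/2}$, where the first factor lies in $L^2$ and the second lies in $L^p(\mathbf{R}^n)$ for every $p>2$, since $\int_{\mathbf{R}^n}(1+|\xi|^2)^{-ps/2}\,d\xi<\infty$ exactly when $ps>n=2s$, i.e.\ $p>2$. Hölder's inequality then places $\mathcal Fu$ in $L^q(\mathbf{R}^n)$ for every $q\in(1,2)$, and Hausdorff--Young gives $u\in L^{q'}(\mathbf{R}^n)$ for every $q'\in(2,\infty)$; together with $u\in L^2$ this proves (b). For part (c), suppose $n<2(s-j)$ and let $|J|\le j$. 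Since $\widehat{D^Ju}(\xi)=(i\xi)^J\mathcal Fu(\xi)$, we have the pointwise bound $|\xi|^{|J|}\,|\mathcal Fu(\xi)|\le(1+|\xi|^2)^{s/2}|\mathcal Fu(\xi)|\cdot(1+|\xi|^2)^{-(s-|J|)/2}$, and because $2(s-|J|)\ge 2(s-j)>n$ the function $(1+|\xi|^2)^{-(s-|J|)}$ is integrable; Cauchy--Schwarz then yields $\widehat{D^Ju}\in L^1(\mathbf{R}^n)$ with $\|\widehat{D^Ju}\|_{L^1}\le C\|u\|_{H^s}$. Hence $D^Ju$ is the inverse Fourier transform of an $L^1$ function, so it is bounded and uniformly continuous, with $\|D^Ju\|_{L^\infty}\le C\|u\|_{H^s}$ for all $|J|\le j$; this gives $H^s(\mathbf{R}^n)\hookrightarrow C^j_b(\mathbf{R}^n)$, and restriction transfers it to $\Omega$.

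The main obstacle is the critical endpoint $r=2^{*}_s$ in (a): the estimate $\|u\|_{L^{2^{*}_s}}\le C\|u\|_{H^s}$ is genuinely non-elementary, and one must either import the Hardy--Littlewood--Sobolev inequality for Riesz potentials, or instead reproduce a self-contained dyadic/truncation argument in the spirit of the one in \cite{NezzaPalatucci&Valdinoci}; the subcritical exponents and the embedding into Hölder spaces are, by comparison, soft. A secondary technical point is the construction of the bounded extension operator for fractional $s$ on a general smooth domain. Finally, if beyond the continuous embeddings stated above one also wants the compactness of $H^s(\Omega)\hookrightarrow L^r(\Omega)$ on a bounded $\Omega$ for every $r$ strictly below the critical exponent, this follows by a Rellich--Kondrachov-type argument: boundedness of the Gagliardo seminorm forces an $L^r$ modulus-of-continuity estimate and hence equi-integrability, so the Fréchet--Kolmogorov criterion gives precompactness.
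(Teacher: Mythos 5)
The paper does not prove this statement; it is quoted verbatim from Adams' \emph{Sobolev Spaces} (\cite{AdamsSobolevSpace}) and used as a black box, so there is no internal proof to compare against. Your argument is nevertheless a correct, self-contained proof along the standard lines: reduce to $\mathbf{R}^n$ via a bounded extension operator, then exploit the Fourier characterization of $H^s(\mathbf{R}^n)$; part (a) via $u=c\,I_s g$ with $g=\mathcal F^{-1}(|\xi|^s\mathcal Fu)\in L^2$ and Hardy--Littlewood--Sobolev at the critical exponent, then interpolation down to $r=2$; part (b) via Hölder in frequency (placing $(1+|\xi|^2)^{-s/2}$ in $L^p$ for $p>2$) plus Hausdorff--Young, which correctly yields every $r<\infty$ but not $r=\infty$; part (c) via Cauchy--Schwarz to get $\widehat{D^Ju}\in L^1$ whenever $2(s-|J|)>n$, hence $D^Ju$ bounded and uniformly continuous. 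The exponent arithmetic and the interpolation steps all check out, and you rightly isolate HLS as the single non-elementary ingredient at the critical endpoint. The only point worth flagging is the extension operator: the theorem as stated allows $\Omega$ unbounded with smooth boundary, and in that generality one needs some uniformity on $\partial\Omega$ (a uniform cone condition or Stein's extension for uniformly Lipschitz domains) for $E$ to exist; for the paper's actual uses ($\Omega=\mathbf R^n$ or bounded) this is immaterial. The remark on compactness at the end pertains to the next theorem in the paper (cited from \cite{Shubin01}), not to the present one, but the Fréchet--Kolmogorov sketch is sound.
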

\begin{theorem}\cite{Shubin01}\label{l:compactness}
Let $s>s'$ and $\Omega$ be a bounded domain with smooth boundary in $\mathbf R^n$. Then the embedding operator
\begin{equation*}
i_s^{s'}:H^s(\Omega)\to H^{s'}(\Omega)
\end{equation*}
is compact.
\end{theorem}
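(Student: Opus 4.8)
The plan is to prove a Rellich–Kondrachov type theorem: reduce, via a bounded extension operator and a fixed cutoff, to a compactness statement for uniformly bounded, uniformly compactly supported functions on $\mathbf{R}^n$, and then argue on the Fourier side, exploiting the strict gap $s>s'$ to kill high frequencies and Arzel\`a--Ascoli (through Paley--Wiener smoothness) to handle low frequencies. We may assume $s'\ge 0$, since for $s'<0$ the space $H^{s'}(\Omega)$ only enlarges and $H^{0}(\Omega)=L^2(\Omega)\hookrightarrow H^{s'}(\Omega)$ is bounded, so the case $s>s'=0$ suffices. \emph{Step 1 (reduction to $\mathbf{R}^n$).} Since $\Omega$ is bounded with smooth boundary, there is a bounded linear extension operator $E\colon H^s(\Omega)\to H^s(\mathbf{R}^n)$ (see e.g.\ \cite{NezzaPalatucci&Valdinoci}); fix $\chi\in C_c^\infty(\mathbf{R}^n)$ with $\chi\equiv 1$ on $\overline{\Omega}$ and set $Tu=\chi Eu$. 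Then $T\colon H^s(\Omega)\to H^s(\mathbf{R}^n)$ is bounded, $\operatorname{supp}(Tu)\subset K:=\operatorname{supp}\chi$ for every $u$, and $(Tu)|_\Omega=u$. As the restriction $H^{s'}(\mathbf{R}^n)\to H^{s'}(\Omega)$ is bounded, it suffices to prove: every bounded sequence $\{v_k\}\subset H^s(\mathbf{R}^n)$ with $\operatorname{supp}v_k\subset K$ admits a subsequence converging in $H^{s'}(\mathbf{R}^n)$.

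Recall that on $\mathbf{R}^n$, for every $t\ge0$, the norm $\|u\|_{H^t}$ is equivalent to $\bigl(\int_{\mathbf{R}^n}(1+|\xi|^2)^t|\mathcal{F}u(\xi)|^2\,d\xi\bigr)^{1/2}$ (for $t=m+\sigma>1$ this is just the Fourier reformulation of the norm defined in Section~2). Let $M=\sup_k\|v_k\|_{H^s}<\infty$. \emph{Step 2 (high frequencies).} Since $s'-s<0$, for every $R>0$,
\[
\int_{|\xi|>R}(1+|\xi|^2)^{s'}\bigl|\mathcal{F}v_k-\mathcal{F}v_\ell\bigr|^2\,d\xi\le(1+R^2)^{s'-s}\,\|v_k-v_\ell\|_{H^s}^2\le 4M^2(1+R^2)^{s'-s},
\]
which can be made smaller than any prescribed $\eta>0$ for all $k,\ell$ simultaneously by choosing $R$ large. \emph{Step 3 (low frequencies).} Because each $v_k$ is supported in $K$, $\mathcal{F}v_k(\xi)=(2\pi)^{-n/2}\int_K e^{-ix\cdot\xi}v_k(x)\,dx$ is smooth in $\xi$, and differentiating under the integral (dominated by $|x_j||v_k|\mathbf 1_K\in L^1$) together with Cauchy--Schwarz gives $|\mathcal{F}v_k(\xi)|+|\nabla_\xi\mathcal{F}v_k(\xi)|\le C_K\|v_k\|_{L^2(K)}\le C_K M$ uniformly on $\mathbf{R}^n$. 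Hence $\{\mathcal{F}v_k\}$ is uniformly bounded and equicontinuous on each ball $\{|\xi|\le R\}$, so by Arzel\`a--Ascoli it has a uniformly (hence $L^2(\{|\xi|\le R\})$-) convergent subsequence; a diagonal argument over $R=1,2,\dots$ yields one subsequence $\{v_{k_j}\}$ with $\{\mathcal{F}v_{k_j}\}$ Cauchy in $L^2(\{|\xi|\le R\})$ for every $R$.

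\emph{Step 4 (conclusion).} Given $\eta>0$, fix $R$ by Step~2 so the contribution of $\{|\xi|>R\}$ to $\|v_{k_j}-v_{k_m}\|_{H^{s'}}^2$ is $<\eta$ for all $j,m$; then by Step~3 choose $N$ so that for $j,m\ge N$ the contribution of $\{|\xi|\le R\}$, bounded by $(1+R^2)^{s'}\int_{|\xi|\le R}|\mathcal{F}v_{k_j}-\mathcal{F}v_{k_m}|^2\,d\xi$, is also $<\eta$. Thus $\{v_{k_j}\}$ is Cauchy in $H^{s'}(\mathbf{R}^n)$; restricting to $\Omega$ shows $\{v_{k_j}|_\Omega\}$ converges in $H^{s'}(\Omega)$, so $i_s^{s'}$ maps bounded sequences to precompact ones and is compact.

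\textbf{Main obstacle.} Everything after Step~1 is soft. The real content is the bounded extension operator $H^s(\Omega)\to H^s(\mathbf{R}^n)$ for noninteger $s$: one localizes with a partition of unity, flattens $\partial\Omega$ via the smooth charts, and reflects/extends (Stein-type construction), the delicate point being to control the Gagliardo seminorm across the glued pieces. This is classical for smooth bounded domains, so one may simply invoke it; alternatively, after the cutoff one could bypass Fourier analysis and apply the Fr\'echet--Kolmogorov compactness criterion, but the equicontinuity-of-translates estimate in the $H^{s'}$ norm is no simpler than the Fourier argument above.
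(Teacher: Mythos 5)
The paper does not prove this statement; it cites it directly from \cite{Shubin01}, so there is no in-text argument to compare against. Your proof is a correct, essentially canonical Rellich--Kondrachov argument: extend to $\mathbf{R}^n$ and cut off to obtain a bounded family with common compact support, then on the Fourier side use the decay $(1+|\xi|^2)^{s'-s}$ to make the high-frequency tail of the $H^{s'}$ norm uniformly small, and use the smoothness and uniform Lipschitz bound of $\mathcal{F}v_k$ (from compact spatial support plus $L^2$ control) together with Arzel\`a--Ascoli and a diagonal extraction to handle the low frequencies. The two outsourced facts --- boundedness of a Stein-type extension $H^s(\Omega)\to H^s(\mathbf{R}^n)$ for a smooth bounded domain, and boundedness of multiplication by $\chi\in C_c^\infty(\mathbf{R}^n)$ on $H^s(\mathbf{R}^n)$ --- are both classical, and you correctly identify the extension operator as the only step with real technical content. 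One minor scope note: your reduction to $s'\ge 0$ implicitly requires $s>0$, which you need anyway in Step~3 to get $\|v_k\|_{L^2}\le\|v_k\|_{H^s}$; this is harmless here, since the paper invokes the theorem only as $H^{2s}(\Omega_1)\to L^2(\Omega_1)$ in the proof of Lemma~\ref{l:invertible} and its definitions of $H^s$ are given only for nonnegative order.
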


As the usual singularly perturbed problem, when dealing with (\ref{e:main}), we rescale the variable $x$ so that the term involving $V(x)$ appears as a small perturbation. Without loss of generality, we assume that
\begin{center}
{\bf the non-degenerate minimum point of $V$ lies at the origin with value $1$.}
\end{center}

Let $y=x/\varepsilon$ and $u(y)=v(\varepsilon y)$. Then $u$ satisfies
\begin{equation}
(-\Delta)^su+(V_{\varepsilon}-1)u+u-|u|^{\alpha}u=0,
\end{equation}
where $V_{\varepsilon}=V(\varepsilon y)$. Note that $V_{\varepsilon}$ approaches $V(0) = 1$ uniformly on any compact set as $\varepsilon \to 0$. Set
\begin{equation}
S_{\varepsilon}(u)=(-\Delta)^su+(V_{\varepsilon}-1)u+u-|u|^{\alpha}u.
\end{equation}
Therefore, as $\varepsilon\to 0$, $S_{\varepsilon}$ has the limit
\begin{equation}
S_0(u)=(-\Delta)^su+u-|u|^{\alpha}u.
\end{equation}

\subsection{Uniqueness and non-degeneracy for the limit equation}
In this subsection, we recall some known results for the limit equation $S_0(u)=0$, i.e., (\ref{e:limit}).

If $s = 1$, the Uniqueness and non-degeneracy of the ground state for (\ref{e:limit}) is due to \cite{KwongARMA1989}.

In the celebrated paper \cite{Frank&LenzmannActaMath}, Frank and Lenzmann proved uniqueness of ground state solution $u_0 = u_0(|x|)\geq 0$ for (\ref{e:limit}) where $n = 1$, $0 < s < 1$, $0 < \alpha < \alpha_{*}(s, 1)$. This result generalized the specific uniqueness result obtained by Amick and Toland for $s = 1/2$ and $\alpha = 1$ in \cite{Amick&Toland91}. They also showed that the associated linearized operator $L_{0} = (-\Delta)^s+1-(\alpha+1)u_{0}^{\alpha}$ is nondegenerate, i.e., its kernel satisfies $\text{ker} L_{0} = \text{span}\{u_{0}'\}$.  This results plays a central role for the stability of solitary waves and blow up analysis for nonlinear dispersive PDEs with fractional Laplacians, such as the generalized Benjamin-Ono and Benjamin-Bona-Mahony water wave equations. We recall that $u_{0}\geq 0$ with $u_{0}\not\equiv 0$ is a ground state solution of (\ref{e:limit}), if $L_{0}$ has Morse index one, i.e., $L_{0}$ has exactly one strictly negative eigenvalue (counting multiplicity), see \cite{ChangKungching93}.

Very recently, in the paper \cite{FrankLenzmann&Silvestre}, Frank, Lenzmann and Silvestre proved uniqueness and nondegeneracy of ground state solutions for (\ref{e:limit}) in arbitrary dimension $n \geq 1$ and any admissible exponent $0 < \alpha < \alpha_{*}(s, n)$. This result classifies all optimizers of the fractional Gagliardo-Nirenberg-Sobolev inequality
\begin{equation}\label{e£ºGNSinequality}
\int_{\mathbf R^n}|u|^{\alpha+2}dx\le C\left(\int_{\mathbf R^n}|(-\Delta)^{s/2}u|^2dx\right)^{\frac{n\alpha}{4s}}\left(\int_{\mathbf R^n}|u|^2dx\right)^{\frac{\alpha+2}{2}-\frac{n\alpha}{4s}}.
\end{equation}

For convenience, we summarize the results of \cite{Frank&LenzmannActaMath} and \cite{FrankLenzmann&Silvestre} in the following theorems.
\begin{theorem}\label{l:decay}
Let $n\geq 1$, $s\in (0, 1)$, and $0 < \alpha < \alpha_{*}(s, n)$. Then the following holds.
\begin{enumerate}
\item[(i)]{\bf (Existence)}: There exists a function $0\leq u_{0}\in H^{s}(\mathbf{R}^n)$ which solves equation (\ref{e:limit}).
\item[(ii)]{\bf (Symmetry, regularity and decay)}: If $u_{0}\in H^{s}(\mathbf{R}^n)$ with $u_{0}\geq 0$ and $u_{0}\not\equiv 0$ solves (\ref{e:limit}), then there exists some $x_{0}\in \mathbf{R}^n$ such that $u_{0}(\cdot - x_{0})$ is radial, positive and strictly decreasing in $|x-x_{0}|$. Moreover, the function $u_{0}$ belongs to $H^{2s+1}(\mathbf{R}^n)\cap C^{\infty}(\mathbf{R}^n)$ and it satisfies
    \begin{equation}
    \frac{C_{1}}{1 + |x|^{n + 2s}} \leq u_{0}(x) \leq \frac{C_{2}}{1 + |x|^{n + 2s}}\quad\mbox{for}\quad x\in\mathbf{R}^n,
    \end{equation}
    with some constants $C_{2}\geq C_{1} > 0$.
\end{enumerate}
\end{theorem}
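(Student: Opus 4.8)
The plan is to establish the four assertions of Theorem~\ref{l:decay}---existence, positivity and radial symmetry, smoothness, and the two-sided decay bound---in an order dictated by their mutual dependence, noting that part (ii) is to be proved for \emph{any} nonnegative nontrivial $H^s$-solution of \eqref{e:limit}, not merely a ground state.

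\textbf{Existence.} I would obtain $u_0$ (up to a harmless rescaling) as a minimizer of the quotient associated with the fractional Gagliardo--Nirenberg--Sobolev inequality, equivalently of $m:=\inf\{\|(-\Delta)^{s/2}u\|_{L^2}^2+\|u\|_{L^2}^2:\ u\in H^s(\mathbf R^n),\ \|u\|_{L^{\alpha+2}}=1\}$. Because $2<\alpha+2<2n/(n-2s)$ (or $\alpha+2<\infty$ when $n\le 2s$), the embedding $H^s\hookrightarrow L^{\alpha+2}$ from Theorem~\ref{l:embedding} makes $m$ positive and finite. Given a minimizing sequence, I would first replace each term by its modulus and then by its symmetric decreasing rearrangement; both operations preserve all $L^p$-norms and do not increase the Gagliardo seminorm (the fractional P\'{o}lya--Szeg\H{o} inequality), so the new sequence is nonnegative, radial and decreasing. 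The radial Sobolev (Strauss-type) compactness lemma then extracts a subsequence converging in $L^{\alpha+2}$ to a nonnegative radial minimizer $w$, and the Euler--Lagrange equation reads $(-\Delta)^sw+w=\mu\,w^{\alpha+1}$ for some $\mu>0$; thus $u_0:=\mu^{1/\alpha}w$ solves \eqref{e:limit}, is nonnegative and nontrivial, and lies in $H^s(\mathbf R^n)$, which gives (i).

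\textbf{Regularity and positivity.} For any nonnegative nontrivial $H^s$-solution $u_0$ I would work with the integral form $u_0=\mathcal G\ast u_0^{\alpha+1}$, where $\mathcal G=\mathcal F^{-1}\big((1+|\xi|^{2s})^{-1}\big)$ is the positive, radial, integrable Bessel kernel of order $2s$. Starting from $u_0\in L^{2n/(n-2s)}$ (Theorem~\ref{l:embedding}(a); when $n\le 2s$ one already lands in every $L^r$ or in $C^0_b$), the $L^q\to L^p$ mapping properties of $\mathcal G$ combined with the exponent gain coming from $\alpha+1>1$ push $u_0$ through a finite bootstrap into $L^\infty$; then $u_0^{\alpha+1}\in L^\infty\cap H^s$, fractional Schauder/Calder\'{o}n--Zygmund estimates give $u_0\in C^{2s+\beta}_{loc}$, and iterating the gain in the right-hand side yields $u_0\in C^\infty$. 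Feeding $(-\Delta)^su_0=u_0^{\alpha+1}-u_0\in L^2$ back in gives $u_0\in H^{2s}$, and once the decay bound below is available all derivatives of $u_0$ decay, so $\nabla u_0\in L^2$, $u_0^{\alpha+1}-u_0\in H^1$, and $u_0\in H^{2s+1}(\mathbf R^n)$. Finally $u_0>0$ everywhere: at an interior zero $x_0$ one would have $(-\Delta)^su_0(x_0)=C(n,s)\int(u_0(x_0)-u_0(y))|x_0-y|^{-n-2s}\,dy<0$, contradicting $(-\Delta)^su_0(x_0)=u_0(x_0)^{\alpha+1}-u_0(x_0)=0$.

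\textbf{Decay.} Using $u_0=\mathcal G\ast u_0^{\alpha+1}$ and the two-sided asymptotics $\mathcal G(x)\asymp|x|^{-(n+2s)}$ for $|x|\ge1$, I would bootstrap the decay exponent: if $u_0(x)\lesssim(1+|x|)^{-a}$ with $a>0$, then $u_0^{\alpha+1}(x)\lesssim(1+|x|)^{-(\alpha+1)a}$, and convolving with $\mathcal G$ gives $u_0(x)\lesssim(1+|x|)^{-\min\{(\alpha+1)a,\,n+2s\}}$ as soon as $(\alpha+1)a>n$; since each step multiplies the rate by $\alpha+1>1$, this threshold is reached after finitely many iterations, producing the upper bound $u_0(x)\le C_2(1+|x|^{n+2s})^{-1}$. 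For the lower bound, positivity of $\mathcal G$ gives, for $|x|\ge2$, $u_0(x)\ge\big(\int_{B_1}u_0^{\alpha+1}\big)\inf_{|y|\le1}\mathcal G(x-y)\gtrsim(1+|x|)^{-(n+2s)}$, and $\int_{B_1}u_0^{\alpha+1}>0$ because $u_0>0$; together these yield the stated two-sided estimate.

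\textbf{Radial symmetry and the main obstacle.} With the decay in hand I would prove that some translate of $u_0$ is radial and strictly decreasing by the method of moving planes for the nonlocal operator---either recasting \eqref{e:limit} through the Caffarelli--Silvestre extension as a degenerate-elliptic problem on $\mathbf R^{n+1}_+$ and running the moving-plane scheme with the corresponding boundary Hopf lemma, or applying moving planes directly to the integral equation $u_0=\mathcal G\ast u_0^{\alpha+1}$ in the style of Chen--Li--Ou; strict monotonicity then follows from the strong maximum principle applied to the reflected difference. The genuinely delicate points are exactly (a) this symmetry step, which cannot even be started before the decay at infinity is known and so forces the slightly unusual ordering above, and (b) pinning the \emph{sharp} decay exponent $n+2s$: nonlocality prevents any faster decay regardless of the nonlinearity, and capturing precisely $|x|^{-(n+2s)}$ requires both the two-sided bounds on $\mathcal G$ and an iteration that keeps the running exponent above the integrability threshold $n$ at every step. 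All of this is carried out in \cite{Frank&LenzmannActaMath} and \cite{FrankLenzmann&Silvestre}; the above merely recovers the line of argument.
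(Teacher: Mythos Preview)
The paper does not prove this theorem at all: it is stated in Section~2.2 merely as a summary of results imported from \cite{Frank&LenzmannActaMath} and \cite{FrankLenzmann&Silvestre}, with no argument given. Your proposal, by contrast, supplies an actual proof sketch, and you correctly acknowledge at the end that you are just reconstructing the line of reasoning from those two references. So there is nothing to compare on the paper's side; your sketch is a reasonable outline of the Frank--Lenzmann(--Silvestre) arguments and is consistent with what the paper cites.
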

\begin{theorem}
{\bf (Nondegeneracy)} Let $n\geq 1$, $s\in(0, 1)$, and $0 < \alpha < \alpha_{*}(s, n)$. Suppose that $0\leq u_{0}\in H^{s}(\mathbf{R}^n)$ is a ground state solution of (\ref{e:limit}). Then
the linearized operator $L_{0} = (-\Delta)^s + 1 - (\alpha + 1)|u_{0}|^{\alpha}$ is nondegenerate, i.e., its kernel is given by
$$
{\rm ker} L_{0} = {\rm span}\{\partial_{x_{1}}u_{0},\cdot\cdot\cdot, \partial_{x_{n}}u_{0}\}.
$$
\end{theorem}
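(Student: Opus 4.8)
The plan is to follow the approach of \cite{Frank&LenzmannActaMath, FrankLenzmann&Silvestre}: use the rotational symmetry of $L_0$ to split the problem into a family of radial spectral problems indexed by angular momentum, and pin down the kernel in each. The inclusion $\mathrm{span}\{\partial_{x_1}u_0,\dots,\partial_{x_n}u_0\}\subseteq\ker L_0$ is immediate and is not where the difficulty lies: after a translation we may assume (Theorem~\ref{l:decay}) that $u_0$ is radial, positive and strictly decreasing about the origin; differentiating (\ref{e:limit}) in $x_i$ and using that $(-\Delta)^s$ commutes with translations gives $L_0(\partial_{x_i}u_0)=0$, while $u_0'\not\equiv0$ makes the $n$ functions $\partial_{x_i}u_0=u_0'(|x|)\,x_i/|x|$ linearly independent. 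All the content is in the reverse inclusion.

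Since $(-\Delta)^s$ and multiplication by the radial function $(\alpha+1)u_0^\alpha$ both commute with $O(n)$, any $\phi\in\ker L_0$ decomposes in spherical harmonics, $\phi(x)=\sum_{\ell\ge0,\,m}\phi_{\ell,m}(|x|)\,Y_{\ell,m}(x/|x|)$ with $-\Delta_{S^{n-1}}Y_{\ell,m}=\lambda_\ell Y_{\ell,m}$, $\lambda_\ell=\ell(\ell+n-2)$, and each radial profile solves $L_0^{(\ell)}\phi_{\ell,m}=0$, where $L_0^{(\ell)}$ is the restriction of $(-\Delta)^s+1-(\alpha+1)u_0^\alpha$ to the $\ell$-th angular sector. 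A key structural point to establish, most naturally through the Caffarelli--Silvestre extension (\ref{e:dirichletneumann}) or an integral formula for the radial part of $(-\Delta)^s$, is that $L_0^{(\ell)}$ is strictly increasing in $\ell$ in the sense of quadratic forms, since the angular eigenvalue $\lambda_\ell$ enters the radial fractional Laplacian monotonically. It then remains to show: (a) $\ker L_0^{(1)}=\mathrm{span}\{u_0'\}$; (b) $\ker L_0^{(\ell)}=\{0\}$ for $\ell\ge2$; (c) $\ker L_0^{(0)}=\{0\}$.

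For (a) and (b) I would use the Perron--Frobenius property of the sector operators, which the cited works derive from positivity of the resolvent/heat kernel of the extended problem: when it is an eigenvalue, $\inf\mathrm{spec}\,L_0^{(\ell)}$ is simple with a sign-definite eigenfunction, and conversely any sign-definite eigenfunction is the ground state. Since $u_0$ is strictly decreasing, $u_0'(r)<0$ for $r>0$ is sign-definite, so it is exactly such an eigenfunction of $L_0^{(1)}$ with eigenvalue $0$; hence $\inf\mathrm{spec}\,L_0^{(1)}=0$ and $\ker L_0^{(1)}=\mathrm{span}\{u_0'\}$, which is (a). The strict monotonicity in $\ell$ then gives $\inf\mathrm{spec}\,L_0^{(\ell)}>0$ for $\ell\ge2$, hence (b).

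The radial sector (c) is the main obstacle, and it is here that the hypothesis that $u_0$ is a \emph{ground state}, i.e.\ that $L_0$ has Morse index one, is essential. Since (a)--(b) give $L_0^{(\ell)}\ge0$ for every $\ell\ge1$, the unique negative eigenvalue of $L_0$ must lie in the $\ell=0$ sector (this is consistent with $\langle L_0u_0,u_0\rangle=-\alpha\int_{\mathbf R^n}u_0^{\alpha+2}\,dx<0$, which follows from (\ref{e:limit}) and already shows $L_0^{(0)}$ has a negative eigenvalue); consequently the second eigenvalue of $L_0^{(0)}$ is $\ge0$, and one only has to exclude the value $0$. To do this I would study the degenerate elliptic problem on the quarter-plane solved by the extension of a putative radial kernel element $\phi_0$, apply the sharp sign-change bounds for eigenfunctions of fractional Schr\"{o}dinger operators from \cite{Frank&LenzmannActaMath, FrankLenzmann&Silvestre} to bound the number of zeros of $\phi_0$, and combine this with a Pohozaev/scaling identity obtained by differentiating the ground-state equation in a dilation parameter to reach a contradiction. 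This transfer of the nondegeneracy information through the nonlocal-to-local reduction and the radial ODE is precisely the technical heart of \cite{FrankLenzmann&Silvestre}; everything else in the argument is comparatively soft.
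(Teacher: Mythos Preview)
The paper does not supply its own proof of this statement: the nondegeneracy theorem is quoted verbatim in Section~2.2 as a summary of the results of \cite{Frank&LenzmannActaMath} and \cite{FrankLenzmann&Silvestre}, with no argument given. So there is nothing in the paper to compare your proposal against.

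That said, your sketch is a faithful outline of the strategy in \cite{FrankLenzmann&Silvestre}: the spherical-harmonic decomposition, the monotonicity of the sector operators $L_0^{(\ell)}$ in $\ell$, the Perron--Frobenius identification of $u_0'$ as the ground state in the $\ell=1$ sector, and the use of the Morse-index-one hypothesis together with oscillation bounds from the Caffarelli--Silvestre extension to handle the radial sector $\ell=0$. As you yourself note, the last step is the nontrivial one; your description of it is accurate at the level of an outline but stops short of the actual mechanism (the continuation argument in $s$ and the precise way the nodal bound interacts with the scaling identity). If the goal were to reproduce the proof in full, that part would need to be fleshed out; if the goal is only to indicate why the result holds, your proposal is adequate and matches the source papers the present paper cites.
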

\begin{theorem}
{\bf (Uniqueness)} Let $n\geq 1$, $s\in(0, 1)$, and $0 < \alpha < \alpha_{*}(s, n)$. The ground state solution $u_{0}\in H^{s}(\mathbf{R}^n)$ for equation (\ref{e:limit}) is unique up to translation.
\end{theorem}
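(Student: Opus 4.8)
The plan is to run the continuation-in-parameter argument of Frank, Lenzmann and Silvestre \cite{FrankLenzmann&Silvestre} (and, for $n=1$, of Frank and Lenzmann \cite{Frank&LenzmannActaMath}), whose two main analytic inputs are already at our disposal: the symmetry, regularity and decay statement of Theorem~\ref{l:decay}(ii), and the Nondegeneracy theorem stated just above. First I would reduce to radial profiles. If $u_0$ is a ground state, then so is $|u_0|$ (the fractional seminorm does not increase under taking absolute values, so $|u_0|$ still minimizes the action on the Nehari manifold), so we may assume $u_0\ge 0$, $u_0\not\equiv 0$; Theorem~\ref{l:decay}(ii) then says that, after a translation, $u_0=Q=Q_{s,\alpha}$ is radial, positive, strictly decreasing in $|x|$, lies in $H^{2s+1}(\mathbf{R}^n)\cap C^\infty(\mathbf{R}^n)$, and obeys the two-sided power bound. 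Hence it suffices to prove that, for each admissible pair $(s,\alpha)$, there is exactly one radial ground state. Since each $\partial_{x_j}u_0$ is odd under $x_j\mapsto-x_j$, no nonzero element of $\mathrm{span}\{\partial_{x_1}u_0,\dots,\partial_{x_n}u_0\}$ is radial; hence the Nondegeneracy theorem gives $\ker L_0|_{H^{2s}_{\mathrm{rad}}}=\{0\}$ for $L_0=(-\Delta)^s+1-(\alpha+1)Q^\alpha$. Because $(\alpha+1)Q^\alpha$ is a continuous function vanishing at infinity, $L_0$ is a self-adjoint relatively compact perturbation of $(-\Delta)^s+1$, so $L_0$ is Fredholm of index zero on $L^2_{\mathrm{rad}}(\mathbf{R}^n)$; with trivial kernel, $L_0\colon H^{2s}_{\mathrm{rad}}\to L^2_{\mathrm{rad}}$ is an isomorphism.

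Next I would organize the continuation. Fix $\alpha$ and use $s$ as the parameter, noting that $\alpha_*(s,n)$ is nondecreasing in $s$, so the constraint $\alpha<\alpha_*(s,n)$ persists along any path from the target $s$ up towards $1$. Set $F(u,s)=(-\Delta)^su+u-|u|^\alpha u$, working near a positive radial ground state in a function space adapted to the Sobolev embeddings and, when $\alpha<1$, to the mild non-smoothness of $t\mapsto|t|^\alpha t$ (the power decay and positivity of $Q$ allow one to absorb this). At such a point $D_uF=L_0$ is invertible, so the implicit function theorem produces a unique local $C^1$ branch $s\mapsto Q_{s,\alpha}$ of positive radial ground states. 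Let $I\subset(0,1)$ be the set of $s$ at which the radial ground state is unique. \emph{Openness}: given $s_0\in I$ and $s_k\to s_0$ with radial ground states $\widetilde Q_k$, the uniform a priori bounds on $\|\widetilde Q_k\|_{L^2}$ and $\|(-\Delta)^{s_k/2}\widetilde Q_k\|_{L^2}$ (from the Pohozaev and Nehari identities for (\ref{e:limit}) and the fractional Gagliardo--Nirenberg--Sobolev inequality) together with the uniform decay of Theorem~\ref{l:decay}(ii) give precompactness in $L^{\alpha+2}(\mathbf{R}^n)$ and a uniform positive lower bound on $\|\widetilde Q_k\|_{L^2}$; a subsequence converges to a nontrivial radial solution at $s_0$ whose Morse index is still one (the lowest eigenvalue of $L_0$ stays negative and $0$ stays out of the spectrum under the perturbation), hence to $Q_{s_0,\alpha}$, and local uniqueness from the implicit function theorem forces $\widetilde Q_k=Q_{s_k,\alpha}$ for large $k$. \emph{Closedness}: the same compactness shows that two branches approaching a common $s_\infty$ converge to radial ground states at $s_\infty$; were these equal, invertibility of $L_0$ would force the branches to coincide near $s_\infty$.

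It remains to exhibit a base point. As $s\uparrow1$, $(-\Delta)^s\to-\Delta$ on Schwartz functions, and the uniform a priori and decay bounds force $Q_{s,\alpha}\to Q_{1,\alpha}$ in $L^2(\mathbf{R}^n)\cap L^\infty(\mathbf{R}^n)$, where $Q_{1,\alpha}$ is the unique positive radial ground state of $-\Delta u+u=u^{\alpha+1}$ supplied by Kwong's theorem \cite{KwongARMA1989}; hence $s\in I$ for all $s$ sufficiently close to $1$, so $I\neq\emptyset$. Being nonempty, open and closed in the connected interval $(0,1)$, $I=(0,1)$, and uniqueness of the ground state up to translation follows from the radial reduction of the first step.

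Finally, the genuine difficulty is really hidden in the Nondegeneracy theorem that we are allowed to invoke: its proof requires lifting $L_0$ to the Caffarelli--Silvestre extension (\ref{e:dirichletneumann}), exploiting the Hamiltonian structure of the extended problem in the $t$-variable and deriving sharp oscillation and nodal bounds for its eigenfunctions (via an Almgren-type frequency-function monotonicity) --- exactly because the nonlocality of $(-\Delta)^s$ forbids the ODE shooting methods at the heart of Kwong's argument. Granting that, the remaining delicate point in the present scheme is the openness step, i.e.\ showing that \emph{every} radial ground state at a parameter near $s_0$ lies on a branch issued from a radial ground state at $s_0$; here the uniform a priori and decay estimates --- not any soft compactness --- do the work.
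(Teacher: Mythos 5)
The paper does not prove this statement: it is quoted from Frank and Lenzmann \cite{Frank&LenzmannActaMath} ($n=1$) and Frank, Lenzmann and Silvestre \cite{FrankLenzmann&Silvestre} (general $n$), and Section~2 of the paper imports it as a black box for the subsequent Lyapunov--Schmidt analysis. Against that background, your sketch is a faithful reconstruction of the continuation-in-$s$ argument of those references: radial reduction via Theorem~\ref{l:decay}(ii), triviality of $\ker L_0$ on the radial sector from the Nondegeneracy theorem together with the oddness of each $\partial_j u_0$, Fredholm invertibility from the relatively compact perturbation structure, implicit-function-theorem branches, a nonempty/open/closed argument over $s\in(0,1)$, and Kwong's classical $s=1$ theorem \cite{KwongARMA1989} as the anchor point. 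You also correctly locate the genuinely hard ingredient, namely nondegeneracy, which is established in the references via the Caffarelli--Silvestre extension and sharp oscillation and nodal bounds for eigenfunctions of the extended problem. Two remarks on details you compress: (i) the openness step needs decay and a priori bounds that are \emph{uniform} in $s$ on compact subintervals of $(0,1)$; Theorem~\ref{l:decay}(ii) as stated gives constants depending on $(s,\alpha)$, and making these uniform along the continuation is a nontrivial piece of the cited argument, not merely soft compactness, as you yourself flag; (ii) the opening $u_0\mapsto|u_0|$ reduction is superfluous in this exposition, since the paper's stated convention for a ground state (Morse index one for $L_0$, with $u_0\ge 0$, $u_0\not\equiv 0$) already builds nonnegativity into the hypothesis.
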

The nondegeneracy implies that $0$ is an isolated spectral point of $L_0$. More precisely, for all $\phi\in (\rm{ker} L_0)^{\perp}$, one has
\begin{equation}
\|L_0\phi\|_{L^2(\mathbf{R}^n)}\ge c\|\phi\|_{H^{2s}(\mathbf{R}^n)}
\end{equation}
for some positive constant $c$. By Lemma C.2 of \cite{FrankLenzmann&Silvestre}, it holds that, for $j = 1,\cdot\cdot\cdot, n$, $\partial_j u_0 : = \partial_{x_j} u_0$ has the following decay estimate,
\begin{equation}
|\partial_j u_0|\leq \frac{C}{1 + |x|^{n + 2s}}.
\end{equation}

It is well known that when $s = 1$, the ground state solution of (\ref{e:limit}) decays exponentially at infinity. But from Thoerem \ref{l:decay}, when $s\in(0, 1)$, the corresponding ground bound state solution decays like $\frac{1}{|x|^{n + 2s}}$ when $|x|\to \infty$. Fortunately, this polynomial decay is enough for us in the estimates of our proof, see Section 3, 4 and 5.

\section{Linearized operator at the ground state solution}
In this section, we study the linearized operator at the ground state solution $u_{0}$.
\subsection{Linearized operator.}
Denote by $\|\cdot\|_0$ and $\|\cdot\|_{2s}$ the norm in $L^2=L^2(\mathbf R^n)$ and $H^{2s}=H^{2s}(\mathbf R^n)$ respectively. We have the following lemma.
\begin{lemma}
For any $\varepsilon>0$, $S_{\varepsilon}$ has continuous Fr\'{e}chet derivative
\begin{equation}
S'_{\varepsilon}(u)=(-\Delta)^s+V_{\varepsilon}-(\alpha+1)|u|^{\alpha}, \quad u\in H^{2s}.
\end{equation}
\end{lemma}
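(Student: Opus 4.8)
The plan is to split $S_\varepsilon$ into its linear part $L_\varepsilon u:=(-\Delta)^su+V_\varepsilon u$ and its nonlinear part $N(u):=-|u|^\alpha u$ and treat them separately. Since $(-\Delta)^s$ maps $H^{2s}$ boundedly into $L^2$ and $V_\varepsilon\in C_b^3(\mathbf R^n)\subset L^\infty(\mathbf R^n)$, multiplication by $V_\varepsilon$ is bounded from $L^2$, hence from $H^{2s}$, into $L^2$; so $L_\varepsilon\in\mathcal L(H^{2s},L^2)$, and being linear and bounded it is everywhere Fréchet differentiable with constant (hence continuous) derivative equal to $L_\varepsilon$ itself. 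Thus the whole content of the lemma is the $C^1$ regularity of $N$. The structural fact that makes this go through is that, under the hypotheses $n\le 3$ and $s>\max\{\tfrac12,\tfrac n4\}$, we have $4s>n$, so Theorem~\ref{l:embedding}(c) applied with exponent $2s$ (and $j=0$) gives a continuous embedding $H^{2s}(\mathbf R^n)\hookrightarrow C_b^0(\mathbf R^n)\subset L^\infty(\mathbf R^n)$, whose norm I denote by $C_0$; this allows pointwise control of the only mildly regular nonlinearity.

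Write $g(t):=|t|^\alpha t$, which for $\alpha\ge1$ is $C^1$ on $\mathbf R$ with $g'(t)=(\alpha+1)|t|^\alpha$, so $g'$ is continuous — hence uniformly continuous on each compact interval — with $|g'(t)|\le(\alpha+1)|t|^\alpha$. First I would check $N$ maps $H^{2s}$ into $L^2$: for $u\in H^{2s}$, $|N(u)(x)|\le\|u\|_{L^\infty}^\alpha|u(x)|$, so $\|N(u)\|_0\le(C_0\|u\|_{2s})^\alpha\|u\|_0<\infty$, and the same bound shows $v\mapsto-(\alpha+1)|u|^\alpha v$ lies in $\mathcal L(H^{2s},L^2)$ — the candidate for $N'(u)$. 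For Fréchet differentiability at $u$ I would use the pointwise integral mean value theorem,
\[
g(u(x)+v(x))-g(u(x))-g'(u(x))v(x)=v(x)\int_0^1\big(g'(u(x)+\theta v(x))-g'(u(x))\big)\,d\theta,
\]
noting that if $\|u\|_{2s}\le R$ and $\|v\|_{2s}\le1$ then every argument of $g'$ above lies in the fixed interval $I_R:=[-C_0(R+1),C_0(R+1)]$, on which $g'$ has some modulus of continuity $\omega_R$. Hence the absolute value of the integral is at most $\omega_R(\|v\|_{L^\infty})\le\omega_R(C_0\|v\|_{2s})$, and
\[
\big\|N(u+v)-N(u)+(\alpha+1)|u|^\alpha v\big\|_0\le\omega_R(C_0\|v\|_{2s})\,\|v\|_0\le C_0\,\omega_R(C_0\|v\|_{2s})\,\|v\|_{2s}=o(\|v\|_{2s}),
\]
which is precisely Fréchet differentiability with $N'(u)v=-(\alpha+1)|u|^\alpha v$, i.e. $S'_\varepsilon(u)=(-\Delta)^s+V_\varepsilon-(\alpha+1)|u|^\alpha$.

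Finally, to see that $u\mapsto S'_\varepsilon(u)$ is continuous from $H^{2s}$ into $\mathcal L(H^{2s},L^2)$: the linear part is constant, and for the nonlinear part the pointwise bound $\big|(|u_1|^\alpha-|u_2|^\alpha)v(x)\big|\le\big\||u_1|^\alpha-|u_2|^\alpha\big\|_{L^\infty}|v(x)|$ gives $\|S'_\varepsilon(u_1)-S'_\varepsilon(u_2)\|_{\mathcal L(H^{2s},L^2)}\le(\alpha+1)\big\||u_1|^\alpha-|u_2|^\alpha\big\|_{L^\infty}$, and since $t\mapsto|t|^\alpha$ is uniformly continuous on bounded intervals while $u_1\to u_2$ in $H^{2s}$ forces $u_1\to u_2$ in $L^\infty$ with all values in a fixed bounded interval, the right-hand side tends to $0$. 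I expect the only genuine point to be the embedding $H^{2s}\hookrightarrow L^\infty$ — the constraints $n\le3$, $s>n/4$ are exactly what make $H^{2s}$ well behaved under the low-regularity nonlinearity $|u|^\alpha u$ — and everything else is routine real analysis.
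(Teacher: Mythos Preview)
Your argument is correct and in fact more complete than the paper's. The paper, after noting that the linear part is obvious, only verifies that the candidate derivative is a bounded linear map $H^{2s}\to L^2$: it checks $|u|^\alpha w\in L^2$ via H\"older and the Sobolev embedding $H^{2s}\hookrightarrow L^{2\alpha+2}$, obtaining $\||u|^\alpha w\|_0\le C\|u\|_{2s}^{\alpha}\|w\|_{2s}$, and then stops. It does not actually verify that the remainder is $o(\|v\|_{2s})$ nor that $u\mapsto S'_\varepsilon(u)$ is continuous --- you do both.

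The genuine methodological difference is the embedding used. The paper works through $H^{2s}\hookrightarrow L^{2\alpha+2}$, which is valid for all $s\in(0,1)$ and $\alpha<\alpha_*(s,n)$ (covering also the case $4s<n$); this is why the extra hypothesis $s>\max\{\tfrac12,\tfrac n4\}$ is only imposed later, at the start of Section~4. You instead invoke $H^{2s}\hookrightarrow C_b^0\subset L^\infty$, which already needs $4s>n$. Your route is cleaner --- the $L^\infty$ control turns the nonlinearity into a pointwise real-analysis problem and makes the full $C^1$ verification transparent --- but it is slightly less general at this particular spot. Since the main theorem assumes $s>n/4$ anyway, nothing is lost for the purposes of the paper.
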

\begin{proof}
Let $u$ be any point in $H^{2s}$. The continuity for $(-\Delta)^s+V_{\varepsilon}$ is obvious, so we only need to prove that for any $w\in H^{2s}$, $|u|^{\alpha}w\in L^2$. In fact,
by Theorem \ref{l:embedding}, we have imbedding
$$H^{2s}(\mathbf R^n)\hookrightarrow L^{q}(\mathbf R^n),$$ $ q\in[2,\frac{2n}{n-4s}]$ for $4s< n$, and $q\in[2,\infty)$ for $4s\ge n$.
Since $$2\alpha+2\in (2,2\alpha_*(s,n)+2)=(2,\frac{2n+4s}{n-2s}),$$
$2\alpha+2$ is in $[2,\frac{2n}{n-4s}]$ for $4s< n$, and in $[2,\infty)$ for $4s\ge n$.
Then by the H\"{o}lder inequality,
\begin{eqnarray*}
\displaystyle\int_{\mathbf{R}^n} (|u|^{\alpha}|w|)^2 dy &\le& \left(\displaystyle\int_{\mathbf{R}^n} |u|^{2\alpha\cdot\frac{2\alpha+2}{2\alpha}}dy\right)^{\frac{\alpha}{\alpha+1}}\left(\displaystyle\int_{\mathbf{R}^n} |w|^{2\cdot\frac{2\alpha+2}{2}}dy\right)^{\frac{1}{\alpha+1}}\\
&\le&\left(\displaystyle\int_{\mathbf{R}^n} |u|^{2\alpha+2}dy\right)^{\frac{\alpha}{\alpha+1}}\left(\displaystyle\int_{\mathbf{R}^n} |w|^{2\alpha+2}dy\right)^{\frac{1}{\alpha+1}}\\
&\leq& C\|u\|_{2s}^{2\alpha}\|w\|_{2s}^2.
\end{eqnarray*}
This completes the proof.
\end{proof}

Let the approximate solution be
\begin{equation}
u_{z,\varepsilon}(y)=u_0\left(y-\frac{z}{\varepsilon}\right).
\end{equation}
The Taylor's expansion of $S_{\varepsilon}$ at $u_{z,\varepsilon}$ is
\begin{equation}\label{e:expansion}
S_{\varepsilon}(u_{z,\varepsilon}+\phi)=S_{\varepsilon}(u_{z,\varepsilon})+S'_{\varepsilon}(u_{z,\varepsilon})\phi+N_{z,\varepsilon}(\phi),
\end{equation}
where
\begin{equation}\label{e:ne}
N_{z,\varepsilon}(\phi)=-\left(|u_{z,\varepsilon}+\phi|^{\alpha}(u_{z,\varepsilon}+\phi)
-|u_{z,\varepsilon}|^{\alpha}u_{z,\varepsilon}-(\alpha+1)|u_{z,\varepsilon}|^{\alpha}\phi\right).
\end{equation}

\subsection{Invertibility of the orthogonal projection of $S_{\varepsilon}'(u_{z,\varepsilon})$.}
We want to invert $S'_{\varepsilon}(u_{z,\varepsilon})$ and obtain a fixed point equation for $\phi$ by setting expansion (\ref{e:expansion}) to be zero.
Let $K_{z,\varepsilon}$ be the kernel of $S'_0(u_{z,\varepsilon})$, which is spanned by $\{\partial_{1}u_{z,\varepsilon},\cdot\cdot\cdot, \partial_{n}u_{z,\varepsilon}\}$. Let
\begin{equation}
\pi^{\perp}_{z,\varepsilon}:L^2\to K^{\perp}_{z,\varepsilon},
\end{equation}
and
\begin{equation}
L_{z,\varepsilon}=\pi^{\perp}_{z,\varepsilon} S'_{\varepsilon}(u_{z,\varepsilon})|_{K^{\perp}_{z,\varepsilon}\cap H^{2s}}.
\end{equation}
Using the fact that $S_{\varepsilon}$ is near $S_0$ for sufficiently small $\varepsilon$, we have
\begin{lemma}\label{l:invertible}
There exist positive numbers $\beta$, $r_1$, $\varepsilon_1$ such that for $|z|<r_1$, $0<\varepsilon<\varepsilon_1$, and $\phi\in K^{\perp}_{z,\varepsilon}\cap H^{2s}$, it holds
\begin{equation}
\|L_{z,\varepsilon}\phi\|_{0}\ge \beta\|\phi\|_{2s}.
\end{equation}
\end{lemma}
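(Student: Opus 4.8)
The plan is to argue by contradiction, using a blow-up (concentration--compactness) scheme to transfer the nondegeneracy of $L_0$ to the perturbed operators $L_{z,\varepsilon}$, and exploiting the positivity of $V$ --- concretely $c_0:=\inf_{\mathbf R^n}V>0$, which may be assumed after the shift by $E$ --- to rule out escaping mass. First I would fix $r_1>0$ so small that $|V(z)-1|<\delta_0$ whenever $|z|<r_1$, where $\delta_0>0$ is a spectral gap of $L_0$ at $0$ (so $\mathrm{spec}(L_0)\cap[-\delta_0,\delta_0]=\{0\}$); such a $\delta_0$ exists because, as recalled above, nondegeneracy makes $0$ an isolated point of $\mathrm{spec}(L_0)$. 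Assume the conclusion fails for this $r_1$: then there are $\varepsilon_k\to 0$, $|z_k|<r_1$ and $\phi_k\in K^{\perp}_{z_k,\varepsilon_k}\cap H^{2s}$ with $\|\phi_k\|_{2s}=1$ and $\|L_{z_k,\varepsilon_k}\phi_k\|_0\to 0$. Passing to a subsequence, $z_k\to\bar z$ with $|\bar z|\le r_1$.

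The next step is to recenter. Put $\tilde\phi_k(y):=\phi_k(y+z_k/\varepsilon_k)$, so $\|\tilde\phi_k\|_{2s}=1$ and $\tilde\phi_k\perp\partial_j u_0$ in $L^2$ for each $j$. Since $u_{z_k,\varepsilon_k}(\cdot+z_k/\varepsilon_k)=u_0$ and $V_{\varepsilon_k}(\cdot+z_k/\varepsilon_k)=V(\varepsilon_k\,\cdot+z_k)$, the $L^2$-decomposition $S'_{\varepsilon_k}(u_{z_k,\varepsilon_k})\phi_k=L_{z_k,\varepsilon_k}\phi_k+\sum_j c_j^k\,\partial_j u_{z_k,\varepsilon_k}$ becomes, after recentering,
\[
(-\Delta)^s\tilde\phi_k+V(\varepsilon_k\,\cdot+z_k)\,\tilde\phi_k-(\alpha+1)u_0^{\alpha}\,\tilde\phi_k=f_k,
\]
where $f_k$ is the recentering of $L_{z_k,\varepsilon_k}\phi_k$ plus $\sum_j c_j^k\,\partial_j u_0$. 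Using that $\partial_j u_{z,\varepsilon}\in\ker S'_0(u_{z,\varepsilon})$ together with self-adjointness and the $L^2$-orthogonality of the $\partial_j u_0$ one gets $c_j^k=\|\partial_j u_0\|_0^{-2}\,\langle\tilde\phi_k,(V(\varepsilon_k\,\cdot+z_k)-1)\partial_j u_0\rangle$; since $\partial_j u_0$ decays and $V$ is bounded, dominated convergence gives $(V(\varepsilon_k\,\cdot+z_k)-1)\partial_j u_0\to(V(\bar z)-1)\partial_j u_0$ in $L^2$, and with $\tilde\phi_k\rightharpoonup\phi_\infty$ weakly in $L^2$ and $\phi_\infty\perp\partial_j u_0$ this forces $c_j^k\to 0$; hence $\|f_k\|_0\to 0$.

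Now I would pass to the limit. Testing the equation against $\chi\in C_c^{\infty}(\mathbf R^n)$ and using $V(\varepsilon_k\,\cdot+z_k)\chi\to V(\bar z)\chi$ in $L^2$, $\tilde\phi_k\rightharpoonup\phi_\infty$ in $L^2$, and $f_k\to 0$, yields $(-\Delta)^s\phi_\infty+V(\bar z)\phi_\infty-(\alpha+1)u_0^{\alpha}\phi_\infty=0$, i.e. $\big(L_0+(V(\bar z)-1)\big)\phi_\infty=0$. If $V(\bar z)=1$ then $\phi_\infty\in\ker L_0$, but $\phi_\infty\perp\partial_j u_0$ for all $j$, so $\phi_\infty=0$; if $V(\bar z)\ne 1$ then $1-V(\bar z)\in(-\delta_0,\delta_0)\setminus\{0\}$ belongs to the resolvent set of $L_0$, so again $\phi_\infty=0$. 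Thus $\tilde\phi_k\rightharpoonup 0$ in $H^{2s}$; by the compact embedding $H^{2s}(B_R)\hookrightarrow L^2(B_R)$ of Theorem \ref{l:compactness} and the decay of $u_0$, this upgrades to $\|u_0^{\alpha}\tilde\phi_k\|_0\to 0$.

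Finally comes the contradiction. Pairing the recentered equation with $\tilde\phi_k$ gives
\[
\|(-\Delta)^{s/2}\tilde\phi_k\|_0^2+\int_{\mathbf R^n}V(\varepsilon_k\,\cdot+z_k)\,\tilde\phi_k^2=(\alpha+1)\int_{\mathbf R^n}u_0^{\alpha}\tilde\phi_k^2+\langle f_k,\tilde\phi_k\rangle\longrightarrow 0,
\]
and since $V\ge c_0>0$ this forces $\|\tilde\phi_k\|_0\to 0$. Rewriting the equation as $\big((-\Delta)^s+1\big)\tilde\phi_k=f_k+(1-V(\varepsilon_k\,\cdot+z_k))\tilde\phi_k+(\alpha+1)u_0^{\alpha}\tilde\phi_k$, the right-hand side tends to $0$ in $L^2$ (using $\|(1-V(\varepsilon_k\,\cdot+z_k))\tilde\phi_k\|_0\le(1+\|V\|_{L^\infty})\|\tilde\phi_k\|_0$), and since $(-\Delta)^s+1\colon H^{2s}\to L^2$ is an isomorphism we get $\|\tilde\phi_k\|_{2s}\to 0$, contradicting $\|\tilde\phi_k\|_{2s}=1$. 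The main obstacle is that $V_\varepsilon-1$ is only bounded, not small, as a multiplication operator, so $L_{z,\varepsilon}$ is \emph{not} a small perturbation of the (translated) nondegenerate operator $S'_0(u_{z,\varepsilon})$; the two-stage compactness argument above --- first killing the weak limit by nondegeneracy, then excluding vanishing mass via $\inf V>0$ and the compact Sobolev embedding --- is exactly what bridges this gap, the only extra wrinkle being that the blow-up limit is $L_0+(V(\bar z)-1)$ rather than $L_0$, absorbed by choosing $r_1$ smaller than the spectral gap of $L_0$.
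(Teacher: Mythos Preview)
Your proof is correct and follows essentially the same contradiction/recentering/compactness scheme as the paper: recenter, show the weak limit vanishes via the nondegeneracy of $L_0$, upgrade to $\|u_0^\alpha\tilde\phi_k\|_0\to 0$ by local compactness, and then use the strict positivity of $V$ to contradict $\|\tilde\phi_k\|_{2s}=1$. The only notable difference is that the paper negates the statement by letting $r_1\to 0$ as well, so that $z_i\to 0$ and the limiting operator is exactly $L_0$; this avoids your spectral-gap device for handling $\bar z\neq 0$, which is correct but unnecessary here.
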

\begin{proof}
If this lemma is wrong, then there exists a sequence of $(z_i,\varepsilon_i)\to 0$ in $\mathbf R^n\times \mathbf R^+$ and a sequence $\phi_i\in K^{\perp}_{z,\varepsilon}\cap H^{2s}$ satisfying
\begin{equation}\label{e:psi1}
\|\phi_i\|_{2s}=1
\end{equation}
and
\begin{equation}\label{e:lpl}
\|L_{z_i,\varepsilon_i}\phi_i\|_0\to 0, \quad \text{as } i\to \infty.
\end{equation}
Let
\begin{equation}\label{e:psit}
\psi_i(y)=\phi_i(y+\frac{z_i}{\varepsilon_i}).
\end{equation}
Since $\|\psi_i\|_{2s}=\|\phi_i\|_{2s}=1$ for all $i$, we may assume (passing to a subsequence) that $\psi_i$ converge weakly to some $\psi_{\infty}$ in $H^{2s}$. Moreover, $\psi_i$ is also $L^2$-orthogonal to $K_{0} := \rm{ker} S'_0(u_{0}) = \rm{ker} L_{0}$ (recall that $L_0 = S'_0(u_0)$), so it follows that the same is true for the weak limit $\psi_{\infty}$, i.e., $\psi_{\infty}\in K_{0}^\perp$.

{\bf Claim 1:} \emph{We have $L_0\psi_{\infty}=0$, which implies that $\psi_{\infty}=0$}.

In fact, define the linear operators
\begin{equation}\label{e:lipi}
L_i=\pi_0^{\perp}((-\Delta)^s+V_i-(\alpha+1)|u_0|^{\alpha}),
\end{equation}
where
\begin{equation}
V_i(y)=V_{\varepsilon_i}(y+\frac{z_i}{\varepsilon_i})=V(\varepsilon_i y+z_i)
\end{equation}
and $\pi_0^{\perp}$ is the orthogonal projection onto the complement of ${\rm ker} L_0$ in $L^2$. Obviously we have
\begin{equation}\label{e:lpp}
L_i\psi_i(y)=(L_{z_i,\varepsilon_i}\phi_i)(y+\frac{z_i}{\varepsilon_i}).
\end{equation}
Let $\Omega\subset \mathbf R^n$ be any bounded domain with smooth boundary. Defining $\|f\|_{0,\Omega}=\left(\int_{\Omega}|f(y)|^2dy\right)^{\frac{1}{2}}$, we have
\begin{eqnarray*}
\|L_0\psi_i\|_{0,\Omega}&=&\|\pi_0^{\perp}L_0\psi_i\|_{0,\Omega}\\
&=&\|(L_i-\pi_0^{\perp}(V_i-1))\psi_i\|_{0,\Omega}\\
&\le& \|L_i\psi_i\|_{0}+(\max_{y\in\Omega}|V_i(y)-1|) \|\psi_i\|_{0}.
\end{eqnarray*}
Since $V\in C_b^3(\mathbf R^n)$ and $ \|\psi_i\|_{0}=1$, we have that $(\max_{y\in\Omega}|V_i(y)-1|) \|\psi_i\|_{0}\to 0$. By (\ref{e:lpl}) and (\ref{e:lpp}), we obtain that
\begin{equation}\label{e:l0lp}
\lim_{i\to\infty}\|L_0\psi_i\|_{0,\Omega}=0.
\end{equation}
Because $\psi_i\rightharpoonup \psi_{\infty}$ in $H^{2s}$, we have that $L_0\psi_i\rightharpoonup L_0\psi_{\infty}$ in $L^2$. Then $L_0\psi_i\rightharpoonup L_0\psi_{\infty}$ in $L^2(\Omega)$. From (\ref{e:l0lp}), we get that $L_0\psi_{\infty}=0$. Since $\psi_{\infty}\in {\rm ker}L_0^\perp$, it follows that $\psi_{\infty}=0$. This completes the proof of Claim 1.

{\bf Claim 2:} \emph{For any bounded domain $\Omega\subset \mathbf R^n$ with smooth boundary, $\psi_i\to 0$ in $L^2(\Omega)$ as $i\to \infty$.}

In fact, let
$$\Omega_{r}=\{x\in \mathbf R^n\,|\,|x-x_0|\le r, \text{ for some point $x_0\in \Omega$}\},$$
and $\eta$ is a smooth cut-off function such that
\begin{equation}
\eta(x)=\begin{cases} 1 , &  \text{ for }x \in \Omega,\\
0, & \text{ for $\mathbf R^n\setminus\Omega_{\frac{1}{2}}$}.
\end{cases}
\end{equation}
Then $\{\eta \psi_i\}$ be a bounded sequence of function in $H^{2s}(\Omega_{1})$. Since $H^{2s}(\Omega_1)$ is a Hilbert space, there exists a function $f\in H^{2s}(\Omega_1)$ such that
$$\{\eta \psi_i\}\rightharpoonup f, \quad\text{(up to a subsequence).}$$
By the compactness of embedding $i_{2s}^0:H^{2s}(\Omega_1)\to L^2(\Omega_1)$ (Theorem \ref{l:compactness}), we have that
$$\eta\psi_i\to f \quad \text{in $L^2(\Omega_1)$}.$$
Then
$$\eta\psi_i|_{\Omega}\to f|_{\Omega} \quad \text{in $L^2(\Omega)$},$$
that is,
$$\psi_i|_{\Omega}\to f|_{\Omega}\quad \text{in $L^2(\Omega)$}.$$
However, by the argument in Claim 1, $\psi_i$ weakly converges to $0$ in $L^2(\Omega)$. Therefore,
\begin{equation}
\psi_i\to 0\quad \text{in $L^2(\Omega)$}.
\end{equation}
So we have Claim 2.

With these two claims, we now prove the lemma. Since $(\alpha+1)|u_0|^{\alpha}$ decays at infinity, it follows that
\begin{equation}\label{e:apl}
(\alpha+1)|u_0|^{\alpha}\psi_i\to 0\quad\text{in $L^2(\mathbf R^n)$.}
\end{equation}
 From (\ref{e:lipi}), (\ref{e:lpp}), (\ref{e:lpl}) and (\ref{e:apl}), we obtain that
 \begin{equation}\label{e:bperp}
 \lim_{i\to \infty}\|\pi_0^{\perp}G_i\psi_i\|_0=0,
 \end{equation}
where $G_i=(-\Delta)^s+V_i$.

Since $G_i$ is self-adjoint, we have that
\begin{equation}\label{e:bselfadjoint}
\langle\partial_ju_0,G_i\psi_i\rangle_{L^2(\mathbf R^n)}=\langle G_i(\partial_ju_0),\psi_i\rangle_{L^2(\mathbf R^n)}.
\end{equation}
Since
\begin{equation}
S'_0(u_0)(\partial_ju_0)=0,\quad \text{for }1\le j\le n,
\end{equation}
 we have
 \begin{equation}
 G_i(\partial_ju_0)=(V_i-1+(\alpha+1)|u_0|^{\alpha})(\partial_ju_0), \quad 1\le j\le n.
 \end{equation}
By (\ref{e:bselfadjoint}),
\begin{equation}\label{e:bv_expansion}
\langle\partial_ju_0,G_i\psi_i\rangle_{L^2(\mathbf R^n)}=\langle(V_i-1)(\partial_ju_0),\psi_i\rangle_{L^2(\mathbf R^n)}
+\langle(\alpha+1)|u_0|^{\alpha}\partial_ju_0,\psi_i\rangle_{L^2(\mathbf R^n)}.
\end{equation}
Estimating the first term on the right hand side of (\ref{e:bv_expansion}), we obtain
\begin{eqnarray}
&&\left|\langle(V_i-1)(\partial_ju_0),\psi_i\rangle_{L^2(\mathbf R^n)}\right|\\
&&\le\int_{\mathbf R^n}|(V_i-1)(\partial_ju_0)|\,|\psi_i|dy\notag\\
&&\le\int_{B_\rho}|(V_i-1)(\partial_ju_0)|\,|\psi_i|dy+\int_{\mathbf R^n\setminus B_\rho}|(V_i-1)(\partial_ju_0)|\,|\psi_i|dy,
\end{eqnarray}
where $\rho>0$ and $B_{\rho}$ is the ball centered at $0$ with radius $\rho$ in $\mathbf R^n$. By the H\"{o}lder inequality,
\begin{eqnarray}
&&\int_{B_\rho}|(V_i-1)(\partial_ju_0)|\,|\psi_i|dy\\
&&\le\left(\int_{B_\rho}|(V_i-1)(\partial_ju_0)|^2dy\right)^{\frac{1}{2}}\left(\int_{B_\rho}|\psi_i|^2dy\right)^{\frac{1}{2}}.\notag
\end{eqnarray}
Since $(V_i-1)\partial_ju_0\to 0$ uniformly in $B_{\rho}$ for some fixed $\rho>0$ and $\|\psi_i\|_0$ is bounded, we have
\begin{equation}
\int_{B_\rho}|(V_i-1)(\partial_ju_0)|\,|\psi_i|dy\to 0.
\end{equation}
Again by the H\"{o}lder inequality and $V\in C_b^3$,
\begin{eqnarray}
&&\int_{\mathbf R^n\setminus B_\rho}|(V_i-1)(\partial_ju_0)|\,|\psi_i|dy\\
&&\le\left(\int_{\mathbf R^n\setminus B_\rho}|(V_i-1)(\partial_ju_0)|^2dy\right)^{\frac{1}{2}}\left(\int_{\mathbf R^n\setminus B_\rho}|\psi_i|^2dy\right)^{\frac{1}{2}}\notag\\
&&\le C\left(\int_{\mathbf R^n\setminus B_\rho}|\partial_ju_0|^2dy\right)^{\frac{1}{2}}\left(\int_{\mathbf R^n\setminus B_\rho}|\psi_i|^2dy\right)^{\frac{1}{2}}\notag.
\end{eqnarray}
Since $\partial_ju_0\in L^{2}(\mathbf R^n)$ for $1\le j\le n$ and $\psi_i$ is bounded in $L^{2}(\mathbf R^n)$, we have
\begin{equation*}
\int_{\mathbf R^n\setminus B_\rho}|(V_i-1)(\partial_ju_0)|\,|\psi_i|dy
\end{equation*}
is sufficiently small if $\rho$ is large enough.
Therefore $\left|\langle(V_i-1)(\partial_ju_0),\psi_i\rangle_{L^2(\mathbf R^n)}\right|$ goes to $0$ as $i\to\infty$.
The second term (\ref{e:bv_expansion}) goes to $0$ since $\psi_i$ converges weakly to $0$. Therefore, we have
\begin{equation}\label{e:bker}
\langle\partial_ju_0,G_i\psi_i\rangle_{L^2(\mathbf R^n)}\to 0,\quad 1\leq j\leq n.
\end{equation}

Hence, by (\ref{e:bperp}) and (\ref{e:bker}), we have
\begin{equation}\label{e:convergence}
G_i\psi_i\to 0 \mbox{\,\,in\,\,} L^2.
\end{equation}

On the other hand, for any $\varphi\in H^{2s}(\mathbf{R}^n)$,
\begin{eqnarray}
&&\langle\varphi,G_i\varphi\rangle_{L^2}\ge \langle\varphi,((-\Delta)^s+1)\varphi\rangle_{L^2}\ge \langle\varphi,\varphi\rangle_{L^2}.
\end{eqnarray}
So for all $1\le i< \infty$, the spectrum of $G_i$ is contained in $[1,\infty)$. Therefore, $G_i$ is invertible, and the operator norm of $G_i^{-1}$ from $L^2(\mathbf{R}^n)$ to $H^{2s}(\mathbf{R}^n)$ is not greater then 1. Thus, we obtain
\begin{equation}
1=\|\psi_i\|_{2s}=\|G_i^{-1}G_i \psi_i\|_{2s}\le \|G_i\psi_i\|_{0}.
\end{equation}
By (\ref{e:convergence}), this is impossible.

This completes the proof.
\end{proof} 
\section{Nonlinear problem}
In this section, we shall prove that for each sufficiently small $z$ and $\varepsilon$, there is an element $\phi_{z,\varepsilon}$ in $K^{\perp}_{z,\varepsilon}$ such that
\begin{equation}\label{e:fixed}
\pi_{z,\varepsilon}^{\perp}S_{\varepsilon}(u_{z,\varepsilon}+\phi_{z,\varepsilon})=0.
\end{equation}
From now on, we assume $s > \max\{\frac{n}{4}, \frac{1}{2}\}$.

By the expansion (\ref{e:expansion}), we have
\begin{equation}
\pi_{z,\varepsilon}^{\perp}S_{\varepsilon}(u_{z,\varepsilon})+\pi_{z,\varepsilon}^{\perp}S'_{\varepsilon}(u_{z,\varepsilon})\phi
+\pi_{z,\varepsilon}^{\perp}N_{z,\varepsilon}(\phi)=0.
\end{equation}
For simplicity, denote $\pi_{z,\varepsilon}^{\perp}S_{\varepsilon}(u_{z,\varepsilon})$ by $S_{z,\varepsilon}^{\perp}$, and $\pi_{z,\varepsilon}^{\perp}N_{z,\varepsilon}$ by $N_{z,\varepsilon}^{\perp}$. From Lemma \ref{l:invertible}, we know that $L_{z, \varepsilon}$ is invertible. Then Equation (\ref{e:fixed}) is equivalent to a fixed point of the map $M_{z,\varepsilon}$ on $H^{2s}(\mathbf R^n)$ given by
\begin{equation}
M_{z,\varepsilon}(\phi)=-L_{z,\varepsilon}^{-1}(N_{z,\varepsilon}^{\perp}(\phi)+S_{z,\varepsilon}^{\perp}).
\end{equation}
We will prove that $M_{z,\varepsilon}$ is a contraction on a suitable neighborhood of $0$.
\begin{lemma}\label{l:contracting-estimate}
There exist positive constants $C$, $\delta$ independent of $z$ and $\varepsilon$, such that for all $\phi_1$, $\phi_2\in H^{2s}$ with $\|\phi_1\|_{2s}\le \delta$, $\|\phi_2\|_{2s}\le\delta$, it holds that
\begin{equation}\label{e:nonlinear-estimate}
\|N_{z,\varepsilon}(\phi_1)\|_0\le C\|\phi_1\|_{2s}^2
\end{equation}
and
\begin{equation}
\|N_{z,\varepsilon}(\phi_2)-N_{z,\varepsilon}(\phi_1)\|_0\le C\max (\|\phi_1\|_{2s},\|\phi_2\|_{2s})\|\phi_2-\phi_1\|_{2s}.
\end{equation}
\end{lemma}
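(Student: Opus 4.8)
\emph{Sketch of proof.} The key observation is that $N_{z,\varepsilon}(\phi)$ is, up to sign, the second--order Taylor remainder of $f(t)=|t|^{\alpha}t$ at the point $u_{z,\varepsilon}$, since $f'(t)=(\alpha+1)|t|^{\alpha}$. The plan is to prove a pointwise inequality for this remainder and then pass to $L^{2}$ by Sobolev embedding. For the pointwise step I would use the integral form of the remainder,
\[
f(a+b)-f(a)-f'(a)b=\int_0^1\bigl(f'(a+tb)-f'(a)\bigr)b\,dt,
\]
together with the elementary inequality $\bigl|\,|x|^{\alpha}-|y|^{\alpha}\,\bigr|\le C_{\alpha}\bigl(|x|^{\alpha-1}+|y|^{\alpha-1}\bigr)|x-y|$, which holds precisely because $\alpha\ge1$ (this is where the hypothesis $\alpha\ge1$ of Theorem \ref{t:main} is used; the borderline case $\alpha=1$, where $f'$ is merely Lipschitz, is still fine). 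Taking $a=u_{z,\varepsilon}$, $b=\phi$ and integrating in $t$ gives
\[
|N_{z,\varepsilon}(\phi)|\le C\bigl(|u_{z,\varepsilon}|^{\alpha-1}+|\phi|^{\alpha-1}\bigr)|\phi|^{2}.
\]
Applying the same reasoning to $f(u_{z,\varepsilon}+\phi_2)-f(u_{z,\varepsilon}+\phi_1)-f'(u_{z,\varepsilon})(\phi_2-\phi_1)=\int_0^1\bigl(f'(u_{z,\varepsilon}+\phi_1+t(\phi_2-\phi_1))-f'(u_{z,\varepsilon})\bigr)(\phi_2-\phi_1)\,dt$ and bounding $|\phi_1+t(\phi_2-\phi_1)|\le|\phi_1|+|\phi_2|$ yields
\[
|N_{z,\varepsilon}(\phi_2)-N_{z,\varepsilon}(\phi_1)|\le C\bigl(|u_{z,\varepsilon}|^{\alpha-1}+|\phi_1|^{\alpha-1}+|\phi_2|^{\alpha-1}\bigr)\bigl(|\phi_1|+|\phi_2|\bigr)\,|\phi_2-\phi_1|.
\]

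For the passage to $L^{2}$, the crucial point is that, under the standing assumption $s>n/4$, Theorem \ref{l:embedding}(c) gives $H^{2s}(\mathbf R^n)\hookrightarrow C^0_b(\mathbf R^n)$, so $\|\phi_j\|_{L^\infty}\le C\|\phi_j\|_{2s}\le C\delta$, while $\|u_{z,\varepsilon}\|_{L^\infty}=\|u_0\|_{L^\infty}$ is a fixed finite constant by Theorem \ref{l:decay}(ii) and translation invariance. Choosing $\delta\le1$, the first parenthetical factor in each pointwise bound is $\le C(1+\delta^{\alpha-1})\le C$, a constant independent of $z$, $\varepsilon$ and $\phi_j$. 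It then remains to estimate $\bigl\|\,|\phi_1|^{2}\,\bigr\|_0=\|\phi_1\|_{L^4}^{2}$ and $\bigl\|(|\phi_1|+|\phi_2|)|\phi_2-\phi_1|\bigr\|_0\le\bigl(\|\phi_1\|_{L^4}+\|\phi_2\|_{L^4}\bigr)\|\phi_2-\phi_1\|_{L^4}$ by H\"older, and to use $H^{2s}\hookrightarrow L^4$ (valid here, e.g. by interpolating $L^2$ and $L^\infty$, or directly from Theorem \ref{l:embedding} since $n\le3<4s$). This produces
\[
\|N_{z,\varepsilon}(\phi_1)\|_0\le C\|\phi_1\|_{2s}^{2},\qquad
\|N_{z,\varepsilon}(\phi_2)-N_{z,\varepsilon}(\phi_1)\|_0\le C\max\bigl(\|\phi_1\|_{2s},\|\phi_2\|_{2s}\bigr)\|\phi_2-\phi_1\|_{2s},
\]
as claimed.

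I expect the only delicate points to be elementary ones: the proof of the pointwise inequality for $|x|^{\alpha}$ (a short mean value argument, with the case $\alpha=1$ handled separately since then $f\in C^{1,1}$ but not $C^{2}$), and the bookkeeping needed to check that every constant is uniform in $z$ and $\varepsilon$, which reduces to the translation invariance of $u_{z,\varepsilon}(y)=u_0\bigl(y-\tfrac{z}{\varepsilon}\bigr)$ and to the fact that the relevant embedding constants depend only on $n$ and $s$. In particular, unlike in Lemma \ref{l:invertible}, no compactness argument or fine asymptotics of $u_0$ are needed here; the polynomial decay and boundedness of $u_0$ are used only through $u_0\in L^\infty$.
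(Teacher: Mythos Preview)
Your proposal is correct and follows essentially the same approach as the paper: a pointwise Taylor--remainder bound for $t\mapsto|t|^\alpha t$ (the paper writes it as $|N_{z,\varepsilon}(\phi)|^2\le C|u_{z,\varepsilon}+\theta\phi|^{2(\alpha-1)}|\phi|^4$ via the mean value theorem, which is equivalent to your split form), followed by the embeddings $H^{2s}\hookrightarrow C_b^0$ to control the $(\alpha-1)$--power factor in $L^\infty$ and $H^{2s}\hookrightarrow L^4$ to control the remaining $\phi$ factors. Your treatment is in fact slightly more careful than the paper's, since you make explicit why the constants are uniform in $z,\varepsilon$ and how the borderline case $\alpha=1$ is handled.
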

\begin{proof}
Since by assumption $s > \frac{n}{4}$, it follows from Theorem \ref{l:embedding} that
\begin{equation}
H^{2s}(\mathbf R^n)\hookrightarrow L^4(\mathbf R^n)
\end{equation}
and
\begin{equation}\label{e:imbedding-C}
H^{2s}(\mathbf R^n)\hookrightarrow C_b^0(\mathbf R^n).
\end{equation}

By (\ref{e:ne}) and the imbedding above, we have, for $\|\phi\|_{2s}\le 1$,
\begin{eqnarray*}
\int_{\mathbf R^n}|N_{z,\varepsilon}(\phi)|^2dx
&=&\int_{\mathbf R^n}\left||u_{z,\varepsilon}+\phi|^{\alpha}(u_{z,\varepsilon}+\phi)-|u_{z,\varepsilon}|^{\alpha}u_{z,\varepsilon}-(\alpha+1)|u_{z,\varepsilon}|^{\alpha}\phi\right|^2dx\\
&\le&C\int_{\mathbf R^n}|u_{z,\varepsilon}+\theta_1\phi|^{2(\alpha-1)}|\phi|^4 dx\le C\|\phi\|_{L^4}^4\le c\|\phi\|_{2s}^4,
\end{eqnarray*}
where $\theta_1$ is a positive function with value not greater than $1$.
For the second inequality, we compute
\begin{eqnarray*}
&&\int_{\mathbf R^n}|N_{z,\varepsilon}(\phi_1)-N_{z,\varepsilon}(\phi_2)|^2dx\\
&=&\int_{\mathbf R^n}\left||u_{z,\varepsilon}+\phi_1|^{\alpha}(u_{z,\varepsilon}+\phi_1)-|u_{z,\varepsilon}+\phi_2|^{\alpha}(u_{z,\varepsilon}+\phi_2)-(\alpha+1)|u_{z,\varepsilon}|^{\alpha}(\phi_1-\phi_2)\right|^2dx\\
&\le&C\int_{\mathbf R^n}|(u_{z,\varepsilon}+\theta_3(\phi_2+\theta_2(\phi_1-\phi_2)))|^{2(\alpha-1)}|(\phi_2+\theta_2(\phi_1-\phi_2))|^2|\phi_1-\phi_2|^2dx\\
&\le&C\|\phi_2+\theta_2(\phi_1-\phi_2)\|_4^2\|\phi_1-\phi_2\|_4^2\le C\left(\max (\|\phi_1\|_{2s},\|\phi_2\|_{2s})\right)^2\|\phi_2-\phi_1\|^2_{2s}.
\end{eqnarray*}
Here $\theta_2$ and $\theta_3$ are functions with similar property as $\theta_1$.

\end{proof}

In the following, for any function $f$, we denote the maximum of $f$ on the closed ball $B_r$ of radius $r$ at $z$ by $f_r(z)$.
\begin{lemma}\label{l:contraction}
There exists a positive constant $C$ such that for every $\rho>0$,
\begin{equation}
\|S_{\varepsilon}(u_{z,\varepsilon})\|_0^2\le C(\rho^{-n-4s}+(V-1)^2_{\rho\varepsilon}(z)).
\end{equation}
Therefore, $\|S_{\varepsilon}(u_{z,\varepsilon})\|_0\to 0$ as $(z,\varepsilon)\to 0$.
\end{lemma}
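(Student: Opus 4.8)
The plan is to exploit that $u_0$ solves the limit equation $S_0(u_0)=0$ and that $S_0$ is translation invariant, so that $S_0(u_{z,\varepsilon})=0$ identically. Subtracting, all the $(z,\varepsilon)$-independent terms cancel and
$$
S_{\varepsilon}(u_{z,\varepsilon})=S_{\varepsilon}(u_{z,\varepsilon})-S_0(u_{z,\varepsilon})=(V_{\varepsilon}-1)u_{z,\varepsilon},
$$
whence $\|S_{\varepsilon}(u_{z,\varepsilon})\|_0^2=\int_{\mathbf R^n}|V(\varepsilon y)-1|^2|u_0(y-z/\varepsilon)|^2\,dy$. After the change of variables $w=y-z/\varepsilon$ this becomes $\int_{\mathbf R^n}|V(\varepsilon w+z)-1|^2|u_0(w)|^2\,dw$, which is the object I would estimate.

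Next I would split the integral over $\{|w|\le\rho\}$ and $\{|w|>\rho\}$. On the inner region $\varepsilon w+z$ lies in the closed ball $B_{\rho\varepsilon}(z)$, so $|V(\varepsilon w+z)-1|^2\le (V-1)^2_{\rho\varepsilon}(z)$ and this piece is bounded by $(V-1)^2_{\rho\varepsilon}(z)\,\|u_0\|_0^2$. On the outer region I would use that $V-1$ is bounded (since $V\in C^3_b(\mathbf R^n)$) together with the decay estimate of Theorem \ref{l:decay}, $|u_0(w)|\le C_2/(1+|w|^{n+2s})\le C_2|w|^{-(n+2s)}$, so that this piece is at most $C\int_{|w|>\rho}|w|^{-(2n+4s)}\,dw=C'\rho^{-n-4s}$, the integral being finite precisely because $2n+4s>n$. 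Collecting the two contributions gives the stated inequality with $C$ depending only on $n$, $s$, $\|V\|_{C^0_b}$ and $u_0$, hence independent of $z$ and $\varepsilon$.

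For the limit assertion I would use a two-parameter argument: given $\eta>0$, first fix $\rho$ so large that $C\rho^{-n-4s}<\eta/2$; then, with $\rho$ frozen, the ball $B_{\rho\varepsilon}(z)$ shrinks to $\{0\}$ as $(z,\varepsilon)\to 0$, and since $V$ is continuous with $V(0)=1$ we get $(V-1)^2_{\rho\varepsilon}(z)\to 0$, so $C(V-1)^2_{\rho\varepsilon}(z)<\eta/2$ once $(z,\varepsilon)$ is small enough. This yields $\|S_{\varepsilon}(u_{z,\varepsilon})\|_0^2<\eta$.

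There is no serious obstacle. The only points requiring mild care are that the outer-region integral $\int_{|w|>\rho}|w|^{-(2n+4s)}\,dw$ converges (it does, the exponent exceeding the dimension) and that the constant it produces is genuinely uniform in $z$ and $\varepsilon$ — which holds because after the translation the profile $u_0$ no longer carries any $(z,\varepsilon)$-dependence — together with performing the order of limits correctly, choosing $\rho$ before sending $(z,\varepsilon)\to 0$.
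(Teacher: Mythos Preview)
Your proof is correct and follows essentially the same route as the paper: identify $S_{\varepsilon}(u_{z,\varepsilon})=(V_{\varepsilon}-1)u_{z,\varepsilon}$, translate so that the profile becomes $u_0$ centered at the origin, split the integral at radius $\rho$, bound the inner piece by $(V-1)^2_{\rho\varepsilon}(z)\|u_0\|_0^2$ and the outer piece using $V\in C_b^3$ together with the decay $u_0(w)\le C(1+|w|^{n+2s})^{-1}$ to obtain $C\rho^{-n-4s}$. Your justification of the limit (choose $\rho$ first, then send $(z,\varepsilon)\to 0$) is slightly more explicit than the paper's, but the argument is the same.
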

\begin{proof}
Indeed, 
\begin{eqnarray*}
\|S_{\varepsilon}(u_{z,\varepsilon})\|_0^2&=&\int_{\mathbf R^n}(V_{\varepsilon}(y)-1)^2 u_{z,\varepsilon}^2(y)dy\\
&=& \int_{\mathbf R^n}(V_{\varepsilon}(y+\frac{z}{\varepsilon})-V(0))^2u_0^2(y)dy\\
&=& \int_{B_{\rho}}(V(z+\varepsilon y)-V(0))^2u_0^2(y)dy+\max_{x\in \mathbf R^n}|V(x)-V(0)|^2\int_{\mathbf R^n\setminus B_{\rho}}u_0^2(y)dy\\
&\le& (V-V(0))^2_{\rho\varepsilon}(z)\|u_0\|_0^2+\max_{x\in \mathbf R^n}|V(x)-V(0)|^2\int_{\mathbf R^n\setminus B_{\rho}}\frac{1}{(1+|y|^{n+2s})^2}dy\\
&\le&C((V-V(0))^2_{\rho\varepsilon}(z)+\rho^{-n-4s}).
\end{eqnarray*}
This completes the proof.
\end{proof}

\begin{lemma}\label{l:contraction2}
There exist positive constants $\Theta$, $r_0$, $\varepsilon_0$ such that for every $z$ and $\varepsilon$ with $|z|<r_0$ and $0<\varepsilon<\varepsilon_0$, there is a unique element $\phi_{z,\varepsilon}\in H^{2s}\cap K_{z,\varepsilon}^{\perp}$ such that
\begin{equation}
S_{\varepsilon}(u_{z,\varepsilon}+\phi_{z,\varepsilon})\in K_{z,\varepsilon}
\end{equation}
and
\begin{equation}\label{e:s-control}
\|\phi_{z,\varepsilon}\|_{2s}\le \Theta\|S_{\varepsilon}(u_{z,\varepsilon})\|_0.
\end{equation}
\end{lemma}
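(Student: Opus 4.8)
The plan is to set up a Banach fixed-point argument for the map $M_{z,\varepsilon}(\phi)=-L_{z,\varepsilon}^{-1}(N_{z,\varepsilon}^{\perp}(\phi)+S_{z,\varepsilon}^{\perp})$ on a small closed ball of $H^{2s}\cap K_{z,\varepsilon}^{\perp}$, exactly as indicated in the paragraph preceding the lemma. First I would fix the radius: choose $R=R(z,\varepsilon):=2\Theta\|S_{\varepsilon}(u_{z,\varepsilon})\|_0$ where $\Theta:=2/\beta$, with $\beta$ the coercivity constant from Lemma \ref{l:invertible}. By Lemma \ref{l:contraction}, $\|S_{\varepsilon}(u_{z,\varepsilon})\|_0\to 0$ as $(z,\varepsilon)\to 0$, so after shrinking $r_0\le r_1$ and $\varepsilon_0\le\varepsilon_1$ we may assume $R\le\delta$, with $\delta$ the threshold from Lemma \ref{l:contracting-estimate}; note also $\pi_{z,\varepsilon}^{\perp}$ is an $L^2$-contraction so $\|N_{z,\varepsilon}^{\perp}(\phi)\|_0\le\|N_{z,\varepsilon}(\phi)\|_0$ and $\|S_{z,\varepsilon}^{\perp}\|_0\le\|S_{\varepsilon}(u_{z,\varepsilon})\|_0$.

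The two estimates are then routine consequences of Lemmas \ref{l:invertible} and \ref{l:contracting-estimate}. For the self-map property: if $\|\phi\|_{2s}\le R$, then Lemma \ref{l:invertible} gives $\|M_{z,\varepsilon}(\phi)\|_{2s}\le\beta^{-1}\big(\|N_{z,\varepsilon}(\phi)\|_0+\|S_{\varepsilon}(u_{z,\varepsilon})\|_0\big)\le\beta^{-1}\big(CR^2+\tfrac{R}{2\Theta}\big)$, and since $R\to 0$ we can further shrink $r_0,\varepsilon_0$ so that $CR\le\tfrac{1}{4}\beta$, giving $\|M_{z,\varepsilon}(\phi)\|_{2s}\le\beta^{-1}(\tfrac14\beta R+\tfrac14\beta R)=\tfrac{R}{2}\cdot\tfrac{2}{\Theta\beta}$; recalling $\Theta\beta=2$ this is $\le\tfrac{R}{2}\cdot 1\le R$. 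Wait—more carefully: $\beta^{-1}\cdot\tfrac{R}{2\Theta}=\tfrac{R}{2\beta\Theta}=\tfrac{R}{4}$ and $\beta^{-1}CR^2\le\tfrac14 R$, so $\|M_{z,\varepsilon}(\phi)\|_{2s}\le\tfrac12 R\le R$. For the contraction property: for $\phi_1,\phi_2$ in the ball, Lemma \ref{l:invertible} and the second estimate of Lemma \ref{l:contracting-estimate} yield $\|M_{z,\varepsilon}(\phi_1)-M_{z,\varepsilon}(\phi_2)\|_{2s}\le\beta^{-1}\|N_{z,\varepsilon}(\phi_1)-N_{z,\varepsilon}(\phi_2)\|_0\le\beta^{-1}C\max(\|\phi_1\|_{2s},\|\phi_2\|_{2s})\|\phi_1-\phi_2\|_{2s}\le\beta^{-1}CR\,\|\phi_1-\phi_2\|_{2s}\le\tfrac12\|\phi_1-\phi_2\|_{2s}$, again after shrinking so that $CR\le\tfrac12\beta$. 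One must also check $M_{z,\varepsilon}$ maps into $K_{z,\varepsilon}^{\perp}$, which is immediate since $L_{z,\varepsilon}^{-1}$ takes values in $K_{z,\varepsilon}^{\perp}\cap H^{2s}$ by construction.

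By the contraction mapping principle there is a unique fixed point $\phi_{z,\varepsilon}$ in the closed ball of radius $R$ in $H^{2s}\cap K_{z,\varepsilon}^{\perp}$, and it satisfies $\pi_{z,\varepsilon}^{\perp}S_{\varepsilon}(u_{z,\varepsilon}+\phi_{z,\varepsilon})=0$, i.e. $S_{\varepsilon}(u_{z,\varepsilon}+\phi_{z,\varepsilon})\in K_{z,\varepsilon}$. For the quantitative bound \eqref{e:s-control}, rerun the self-map estimate at $\phi=\phi_{z,\varepsilon}$: $\|\phi_{z,\varepsilon}\|_{2s}=\|M_{z,\varepsilon}(\phi_{z,\varepsilon})\|_{2s}\le\beta^{-1}(CR^2+\|S_{\varepsilon}(u_{z,\varepsilon})\|_0)\le\tfrac14\|\phi_{z,\varepsilon}\|_{2s}+\beta^{-1}\|S_{\varepsilon}(u_{z,\varepsilon})\|_0$, whence $\|\phi_{z,\varepsilon}\|_{2s}\le\tfrac{4}{3\beta}\|S_{\varepsilon}(u_{z,\varepsilon})\|_0\le\Theta\|S_{\varepsilon}(u_{z,\varepsilon})\|_0$. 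Finally, the uniqueness claim over the whole space $H^{2s}\cap K_{z,\varepsilon}^{\perp}$ (not just the small ball) follows because any solution automatically satisfies the same a priori bound \eqref{e:s-control} and hence lies in the ball where uniqueness has been established.

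I do not expect a serious obstacle here: the genuinely hard analytic inputs—invertibility of the projected linearized operator (Lemma \ref{l:invertible}) and the quadratic nonlinear estimates (Lemma \ref{l:contracting-estimate})—have already been established, and the polynomial decay of $u_0$ has already been shown sufficient for Lemma \ref{l:contraction}. The only mild care needed is bookkeeping the constants so that all the smallness requirements ($R\le\delta$, $CR\le\tfrac14\beta$, etc.) can be met simultaneously by one choice of $r_0$ and $\varepsilon_0$; this is possible precisely because $\|S_{\varepsilon}(u_{z,\varepsilon})\|_0\to 0$ uniformly as $(z,\varepsilon)\to 0$. If anything is delicate, it is making sure $\pi_{z,\varepsilon}^{\perp}$ behaves well as $(z,\varepsilon)$ varies so that the constants $\beta$, $C$, $\delta$ are genuinely uniform in the stated range—but this is exactly what Lemmas \ref{l:invertible} and \ref{l:contracting-estimate} assert.
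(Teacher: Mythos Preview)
Your approach is essentially the same as the paper's: a contraction-mapping argument for $M_{z,\varepsilon}$ on a small ball in $H^{2s}\cap K_{z,\varepsilon}^{\perp}$, using Lemmas~\ref{l:invertible}, \ref{l:contracting-estimate}, and \ref{l:contraction} as inputs. The only cosmetic difference is that the paper works on a ball of \emph{fixed} radius $\epsilon=\min(\beta/2C,\delta)$, arranges $\|S_{z,\varepsilon}^{\perp}\|_0\le\tfrac12\epsilon\beta$, and then deduces \eqref{e:s-control} from the standard fact that a $\tfrac12$-contraction with $\|M(0)\|\le\beta^{-1}\|S_\varepsilon(u_{z,\varepsilon})\|_0$ has its fixed point within $2\beta^{-1}\|S_\varepsilon(u_{z,\varepsilon})\|_0$; you instead let the radius $R$ depend on $(z,\varepsilon)$. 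Both routes give $\Theta=2/\beta$.

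One small overreach: your final paragraph asserts uniqueness in all of $H^{2s}\cap K_{z,\varepsilon}^{\perp}$ by saying ``any solution automatically satisfies the same a~priori bound.'' That step is circular as written, since the quadratic bound on $N_{z,\varepsilon}$ from Lemma~\ref{l:contracting-estimate} is only available once you know $\|\phi\|_{2s}\le\delta$. The paper in fact only proves (and only needs) uniqueness within the small ball, so this does not affect the argument; just drop or soften that sentence.
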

\begin{proof}
Let $\beta$, $r_1$, $\varepsilon_1$, $C$ and $\delta$ be the constants from Lemma \ref{l:invertible} and Lemma \ref{l:contracting-estimate}. Let $\epsilon=\min(\frac{\beta}{2C},\delta)$. By Lemma \ref{l:contraction}, we choose $\varepsilon_0\le\varepsilon_1$ and $r_0\le r_1$ small enough so that
when $|z|<r_0$ and $0<\varepsilon<\varepsilon_0$,
\begin{equation}
\|S_{z,\varepsilon}^{\perp}\|_0\le \frac{1}{2}\epsilon\beta.
\end{equation}
Recall that $S_{z,\varepsilon}^{\perp}=\pi_{z,\varepsilon}^{\perp}S_{\varepsilon}(u_{z,\varepsilon})$.

{\bf Claim:} \emph{For $|z|\le r_0$ and $0< \varepsilon\le \varepsilon_0$, $M_{z,\varepsilon}$ maps the ball $B_{\epsilon}^{\perp}:=B_{\epsilon}(0)\cap K_{z,\varepsilon}^{\perp}\subset H^{2s}$ continuously into itself.}

In fact, if $\phi\in B_{\epsilon}^{\perp}$, then $M_{z,\varepsilon}(\phi)\in K_{z,\varepsilon}^{\perp}$, and
\begin{eqnarray*}
\|M_{z,\varepsilon}(\phi)\|_{2s}&\le& \frac{1}{\beta}\|N_{z,\varepsilon}(\phi)+S_{z,\varepsilon}^{\perp}\|_0\\
&\le&\frac{1}{\beta}(C\|\phi\|_{2s}^2+\|S_{z,\varepsilon}^{\perp}\|_0)\\
&\le& \frac{C\epsilon^2}{\beta}+\frac{1}{\beta}(\frac{1}{2}\epsilon\beta)\le\frac{1}{2}\epsilon+\frac{1}{2}\epsilon=\epsilon.
\end{eqnarray*}
Therefore, $M_{z,\varepsilon}(\phi)$ is also in $B_{\epsilon}^{\perp}$.
Thus we have the claim.

Next, we prove that $M_{z,\varepsilon}$ is contracting. In fact, for $\phi_1$ and $\phi_2$ in $B_{\epsilon}^{\perp}$, we have
\begin{eqnarray*}
\|M_{z,\varepsilon}(\phi_1)-M_{z,\varepsilon}(\phi_2)\|_{2s}&=& \|L_{z,\varepsilon}^{-1}(N_{z,\varepsilon}^{\perp}(\phi_1)-N_{z,\varepsilon}^{\perp}(\phi_1))\|_{2s}\\
&\le&\frac{C\epsilon}{\beta}\|\phi_1-\phi_2\|_{2s}\le \frac{1}{2}\|\phi_1-\phi_2\|_{2s}.
\end{eqnarray*}
Thus by the contraction mapping theorem, there is fixed point $\phi_{z,\varepsilon}\in B_{\epsilon}^{\perp}$ of the equation $M_{z,\varepsilon}(\phi) = \phi$. From the estimate
\begin{equation}
\|(M_{z,\varepsilon})(0)-0\|_{2s}=\|L_{z,\varepsilon}^{-1}S_{z,\varepsilon}^{\perp}\|_{2s}\le\beta^{-1}\|S_{\varepsilon}(u_{z,\varepsilon})\|_0
\end{equation}
and the fact that $M_{z,\varepsilon}$ is $\frac{1}{2}$-contracting, we obtain that
\begin{equation}
\|\phi_{z,\varepsilon}\|_{2s}\le \Theta\|S_{\varepsilon}(u_{z,\varepsilon})\|_0
\end{equation}
where $\Theta=2/\beta$.
\end{proof}

\begin{remark}
By Lemma \ref{l:contraction} and Lemma \ref{l:contraction2}, we obtain that $\|\phi_{z,\varepsilon}\|_{2s}\to 0$ as $(z,\varepsilon)\to 0$.
\end{remark}

\section{The reduced problem and proof of the main theorem}
In this section, we shall prove the main result of this paper.
\subsection{The reduced problem.} Let $r_0$ and $\varepsilon_0$ be the constants from Lemma \ref{l:contraction2}. Assume that $\varepsilon<\varepsilon_0$.
We project onto the kernel $K_{z,\varepsilon}$ to define a reduced map
\begin{eqnarray}
s_{\varepsilon} &:& B_{r_0}\to \mathbf R^n\notag\\
s_{\varepsilon}(z)&=& \frac{1}{\varepsilon}(s_{\varepsilon}(z)_1,\cdots,s_{\varepsilon}(z)_n).
\end{eqnarray}
Here $s_{\varepsilon}(z)_j=\langle S_{\varepsilon}(u_{z,\varepsilon}+\phi_{z,\varepsilon}),\partial_{y_j}u_{z,\varepsilon}\rangle$, $1\le j\le n$.
Define
\begin{equation}
v_0(z)=-\frac{1}{2}\|u_0\|^2_0 D^2V(0)z,
\end{equation}
where $D^2V(0)$ is the Hessian of $V$ at $0$,
and a family of maps $v_{\varepsilon}$ defined on $B_1$ by
\begin{equation}
v_{\varepsilon}(z)=\varepsilon^{-\nu}s_{\varepsilon}(\varepsilon^{\nu} z)
\end{equation}
 where $\nu$ is a fixed number chosen so that $\frac{1}{3}<\nu<\min(\frac{3(n+4s)-4}{(n+4s)+4},\frac{2(n+4s)-2}{(n+4s)+2})$ and $\varepsilon\le \min (\varepsilon_0,r_0^{1/\nu})$.
Then we have the following lemma.
\begin{lemma}\label{l:uniform}
As $\varepsilon\to 0$, $v_{\varepsilon}$ converge uniformly to $v_0$ on $B_1(0)$.
\end{lemma}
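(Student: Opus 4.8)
The goal is to show $v_\varepsilon \to v_0$ uniformly on $B_1(0)$, where $v_\varepsilon(z) = \varepsilon^{-\nu} s_\varepsilon(\varepsilon^\nu z)$ and $s_\varepsilon(z)_j = \langle S_\varepsilon(u_{z,\varepsilon}+\phi_{z,\varepsilon}), \partial_{y_j} u_{z,\varepsilon}\rangle$. The natural approach is to expand $s_\varepsilon(z)_j$ using the Taylor expansion (\ref{e:expansion}) of $S_\varepsilon$ at $u_{z,\varepsilon}$, splitting
\[
s_\varepsilon(z)_j = \langle S_\varepsilon(u_{z,\varepsilon}),\partial_{y_j}u_{z,\varepsilon}\rangle + \langle S'_\varepsilon(u_{z,\varepsilon})\phi_{z,\varepsilon},\partial_{y_j}u_{z,\varepsilon}\rangle + \langle N_{z,\varepsilon}(\phi_{z,\varepsilon}),\partial_{y_j}u_{z,\varepsilon}\rangle,
\]
and showing that the first term produces the main term $-\tfrac{1}{2}\|u_0\|_0^2\, (D^2V(0)z)_j \cdot \varepsilon$ up to higher-order errors, while the remaining two terms are $o(\varepsilon)$ after dividing by $\varepsilon$ and rescaling $z \mapsto \varepsilon^\nu z$.

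\textbf{Step 1: the leading term.} Using $S_0(u_0)=0$, one has $S_\varepsilon(u_{z,\varepsilon}) = (V_\varepsilon - 1)u_{z,\varepsilon}$, so after the change of variables $y \mapsto y + z/\varepsilon$,
\[
\langle S_\varepsilon(u_{z,\varepsilon}),\partial_{y_j}u_{z,\varepsilon}\rangle = \int_{\mathbf R^n}(V(\varepsilon y + z) - 1)\,u_0(y)\,\partial_{y_j}u_0(y)\,dy.
\]
Since $z$ is a nondegenerate critical point of $V$ at the origin with $V(0)=1$ and $DV(0)=0$, Taylor-expand $V(\varepsilon y + z) - 1 = DV(z)\cdot \varepsilon y + \tfrac{\varepsilon^2}{2} (D^2V(z) y)\cdot y + O(\varepsilon^3|y|^3)$, and further $DV(z) = D^2V(0)z + O(|z|^2)$. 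The term $\tfrac12\int (D^2V(z)y)\cdot y\, u_0 \partial_{y_j}u_0$ vanishes by oddness, the $DV(z)\cdot \varepsilon y$ term contributes $\varepsilon\, (DV(z))_k \int y_k u_0 \partial_{y_j}u_0\, dy = -\tfrac{\varepsilon}{2}(DV(z))_j \|u_0\|_0^2$ (integrating by parts, using $\int y_k u_0\partial_{y_j}u_0 = -\tfrac12\delta_{jk}\|u_0\|_0^2$), and the cubic remainder is $O(\varepsilon^3)$ thanks to the polynomial decay $u_0,\partial_j u_0 = O(|x|^{-n-2s})$ from Theorem \ref{l:decay} (which makes $\int |y|^3 u_0 |\partial_j u_0|\,dy$ finite since $n=1,2,3$ and $4s+4 > n+6$ is not needed — one only needs $2(n+2s) - n - 3 > n$, i.e. $s > 3/4$, and more care with the decay rate; if the integral is only finite for lower powers one truncates and uses the tail smallness as in Lemma \ref{l:contraction}). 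Thus the first term equals $\varepsilon\left(-\tfrac12\|u_0\|_0^2 (D^2V(0)z)_j + O(|z|^2) + O(\varepsilon^2)\right)$. After replacing $z$ by $\varepsilon^\nu z$ and multiplying by $\varepsilon^{-1-\nu}$ this gives exactly $v_0(z)_j + O(\varepsilon^\nu) + O(\varepsilon^{2-\nu})$, which is $o(1)$ uniformly on $B_1$ by the choice $\tfrac13 < \nu < 1$.

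\textbf{Step 2: the error terms and the obstacle.} For the third term, $|\langle N_{z,\varepsilon}(\phi_{z,\varepsilon}),\partial_{y_j}u_{z,\varepsilon}\rangle| \le \|N_{z,\varepsilon}(\phi_{z,\varepsilon})\|_0 \|\partial_j u_0\|_0 \le C\|\phi_{z,\varepsilon}\|_{2s}^2$ by Lemma \ref{l:contracting-estimate}, and by Lemmas \ref{l:contraction}–\ref{l:contraction2} with $z$ replaced by $\varepsilon^\nu z$ and $\rho$ chosen optimally (balancing $\rho^{-n-4s}$ against $(V-1)^2_{\rho\varepsilon}(\varepsilon^\nu z) = O((\rho\varepsilon + \varepsilon^\nu)^4)$), one gets $\|\phi_{\varepsilon^\nu z,\varepsilon}\|_{2s} = O(\varepsilon^{\mu})$ for some explicit $\mu > 0$; squaring and dividing by $\varepsilon^{1+\nu}$ must be $o(1)$, which is precisely what the upper bound $\nu < \frac{3(n+4s)-4}{(n+4s)+4}$ is engineered to guarantee. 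The genuinely delicate term is the second one: $\langle S'_\varepsilon(u_{z,\varepsilon})\phi_{z,\varepsilon}, \partial_{y_j}u_{z,\varepsilon}\rangle$. Here one cannot just use Cauchy–Schwarz (that would only give $O(\|\phi\|_{2s})$, far too weak). Instead, move the self-adjoint operator onto $\partial_{y_j}u_{z,\varepsilon}$: since $S'_0(u_{z,\varepsilon})\partial_{y_j}u_{z,\varepsilon} = 0$, we have $S'_\varepsilon(u_{z,\varepsilon})\partial_{y_j}u_{z,\varepsilon} = (V_\varepsilon - 1)\partial_{y_j}u_{z,\varepsilon}$, hence $\langle S'_\varepsilon(u_{z,\varepsilon})\phi_{z,\varepsilon},\partial_{y_j}u_{z,\varepsilon}\rangle = \langle \phi_{z,\varepsilon}, (V_\varepsilon-1)\partial_{y_j}u_{z,\varepsilon}\rangle$, and this is bounded by $\|\phi_{z,\varepsilon}\|_0 \cdot \|(V_\varepsilon - 1)\partial_{y_j}u_{z,\varepsilon}\|_0 \le C \|\phi_{z,\varepsilon}\|_{2s}\|S_\varepsilon(u_{z,\varepsilon})\|_0 \le C\|S_\varepsilon(u_{z,\varepsilon})\|_0^2$ by (\ref{e:s-control}). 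So this term has the same order as $\|\phi_{z,\varepsilon}\|_{2s}^2$ and is handled by the same estimate on $\nu$.

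\textbf{Conclusion.} Combining, $v_\varepsilon(z)_j = v_0(z)_j + O(\varepsilon^\nu) + O(\varepsilon^{2-\nu}) + O(\varepsilon^{-1-\nu}\|S_\varepsilon(u_{\varepsilon^\nu z,\varepsilon})\|_0^2)$, and each error term is $o(1)$ uniformly for $|z|\le 1$ by the constraints $\tfrac13 < \nu < \min\bigl(\tfrac{3(n+4s)-4}{(n+4s)+4}, \tfrac{2(n+4s)-2}{(n+4s)+2}\bigr)$, which are exactly the exponent bounds that make the $\phi$-quadratic error beat $\varepsilon^{1+\nu}$ after optimizing $\rho$. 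The main obstacle is the bookkeeping in Step 1: one must verify that the decay rate $|x|^{-n-2s}$ of $u_0$ and $\partial_j u_0$ is fast enough for $\int |y|^2 u_0|\partial_j u_0|\,dy < \infty$ (needed to kill the quadratic-in-$y$ term by symmetry rather than merely bound it) and to control the cubic Taylor remainder; when $s$ is close to $3/4$ these moment integrals are borderline, and one may need to truncate at radius $\rho$ and absorb the tail exactly as in the proof of Lemma \ref{l:contraction}, which is why the second constraint on $\nu$ appears. The rest is a careful but routine assembly of the estimates already established in Sections 3 and 4.
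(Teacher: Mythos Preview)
Your proposal is correct and follows essentially the same approach as the paper: the same three-term split $e_1+e_2+e_3$, the same self-adjointness trick for $e_2$, and the same nonlinear/quadratic estimate for $e_3$, assembled with the same choice of $\rho=\varepsilon^{-\lambda}$ and the same constraints on $\nu$. The only organizational difference is in the leading term: the paper integrates by parts \emph{first} to obtain $e_1=-\tfrac{\varepsilon}{2}\int\partial_jV(z+\varepsilon y)\,|u_0(y)|^2\,dy$ and then bounds $|\partial_jV(z+\varepsilon y)-[D^2V(0)(z+\varepsilon y)]_j|\le C|z+\varepsilon y|^2$ (the extra $\varepsilon y$ inside $D^2V(0)$ integrates to zero by radial symmetry of $u_0$), so the error becomes $\int |z+\varepsilon y|^2|u_0|^2$, which is controlled by truncation at radius $\rho$ --- this cleanly sidesteps the cubic-moment question $\int|y|^3u_0|\partial_ju_0|\,dy$ that worried you.
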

\begin{proof}
The expansion (\ref{e:expansion}) gives, for $1\le j\le n$,
\begin{eqnarray*}
 s_{\varepsilon}(z)_j&=&\langle \partial_{y_j}u_{z,\varepsilon},S_{\varepsilon}(u_{z,\varepsilon}+\phi_{z,\varepsilon})\rangle_{L^2}\\
 &=&\langle \partial_{y_j}u_{z,\varepsilon},S_{\varepsilon}(u_{z,\varepsilon})\rangle_{L^2}+\langle \partial_{y_j}u_{z,\varepsilon},S_{\varepsilon}'(u_{z,\varepsilon})\phi_{z,\varepsilon}\rangle_{L^2}+\langle \partial_{y_j}u_{z,\varepsilon},N_{z,\varepsilon}(\phi_{z,\varepsilon})\rangle_{L^2}\\
 &=& e_1+e_2+e_3.
\end{eqnarray*}
Since $S_0(u_{z,\varepsilon})=0$,  we have
\begin{eqnarray*}
e_1&=& \langle \partial_{y_j}u_{z,\varepsilon},(S_{\varepsilon}-S_0)(u_{z,\varepsilon})\rangle_{L^2}\\
&=& \langle \partial_{y_j}u_{z,\varepsilon},(V_{\varepsilon}-V(0))u_{z,\varepsilon}\rangle_{L^2}\\
&=& -\langle u_{z,\varepsilon},(\partial_{y_j}V_{\varepsilon}) u_{z,\varepsilon}\rangle_{L^2}-\langle u_{z,\varepsilon},(V_{\varepsilon}-V(0))(\partial_{y_j}u_{z,\varepsilon})\rangle_{L^2}.
\end{eqnarray*}
Then
\begin{eqnarray*}
e_1&=&-\frac{1}{2}\langle u_{z,\varepsilon},(\partial_{y_j}V_{\varepsilon}) u_{z,\varepsilon}\rangle_{L^2}\\
&=&-\frac{1}{2}\int_{\mathbf R^n}(\partial_{y_j}V_{\varepsilon})(y)|u_0(y-\frac{z}{\varepsilon})|^2dy\\
&=& -\frac{\varepsilon}{2}\int_{\mathbf R^n}\partial_j V(z+\varepsilon y)|u_0(y)|^2dy.
\end{eqnarray*}
So by the radial symmetry of $u_0$,
\begin{eqnarray*}
|\frac{e_1}{\varepsilon}-(v_0(z))_j|&=& \frac{1}{2}\left|\int_{\mathbf R^n}([D^2V(0)z]_j-\partial_j V(z+\varepsilon y))|u_0(y)|^2dy\right|\\
&=&\frac{1}{2}\left|\int_{\mathbf R^n}([D^2V(0)(z+\varepsilon y)]_j-\partial_j V(z+\varepsilon y))|u_0(y)|^2dy\right|\\
&\le& C\int_{\mathbf R^n} |z+\varepsilon y|^2|u_0(y)|^2dy.
\end{eqnarray*}
From the asymptotic property of $u_0$, we have, for any $\rho>0$,
\begin{eqnarray*}
&&\int_{\mathbf R^n} |z+\varepsilon y|^2|u_0(y)|^2dy\\
&&\le C\int_{\mathbf R^n}\frac{|z+\varepsilon y|^2}{(1+|y|^{n+2s})^2}dy\\
&&\le C\left(\int_{B_{\rho}}\frac{|z+\varepsilon y|^2}{(1+|y|^{n+2s})^2}dy+\int_{\mathbf R^n\backslash B_{\rho}}\frac{|z+\varepsilon y|^2}{(1+|y|^{n+2s})^2}dy\right).
\end{eqnarray*}
Then
\begin{eqnarray}
\int_{B_{\rho}}\frac{|z+\varepsilon y|^2}{(1+|y|^{n+2s})^2}dy
&\le&C(|z|+\varepsilon \rho)^2,
\end{eqnarray}
and
\begin{eqnarray*}
&&\int_{\mathbf R^n\setminus B_{\rho}}\frac{|z+\varepsilon y|^2}{(1+|y|^{n+2s})^2}dy\\
&&\le C\left(\int_{\mathbf R^n\setminus B_{\rho}}\frac{|z|^2}{(1+|y|^{n+2s})^2}dy
+\int_{\mathbf R^n\setminus B_{\rho}}\frac{|\varepsilon y|^2}{(1+|y|^{n+2s})^2}dy\right)\\
&&\le C(\rho^{-n-4s}+\varepsilon^{2}\rho^{-n-4s+2}).
\end{eqnarray*}
Therefore,
\begin{equation}
|\frac{e_1}{\varepsilon}-(v_0(z))_j|\le C((|z|+\varepsilon \rho)^2+\rho^{-n-4s}+\varepsilon^{2}\rho^{-n-4s+2}).
\end{equation}
Next, by (\ref{e:nonlinear-estimate}) and (\ref{e:s-control}), we have
\begin{equation}
|e_3|\le C\|\phi_{z,\varepsilon}\|_{2s}^2\le C\|S_{\varepsilon}(u_{z,\varepsilon})\|_0^2.
\end{equation}
Since $S_{\varepsilon}'(u_{z,\varepsilon})\partial_ju_{z,\varepsilon}=(V_{\varepsilon}-V(0))\partial_ju_{z,\varepsilon}$ and the operator $S_{\varepsilon}'(u_{z,\varepsilon})$ is self-adjoint, we obtain
\begin{equation}\label{e:v-s}
|e_2|=\langle(V_{\varepsilon}-V(0))\partial_ju_{z,\varepsilon},\phi_{z,\varepsilon}\rangle_{L^2}
\le C\|(V_{\varepsilon}-V(0))\partial_ju_{z,\varepsilon}\|_0\|S_{\varepsilon}(u_{z,\varepsilon})\|_0.
\end{equation}
Estimate by the same method in the proof of Lemma \ref{l:contraction}, we have
\begin{eqnarray*}
&&\|(V_{\varepsilon}-V(0))\partial_ju_{z,\varepsilon}\|_0^2=\int_{\mathbf R^n}|(V_{\varepsilon}-V(0))\partial_ju_{z,\varepsilon}|^2dy = \int_{\mathbf R^n}|(V_{\varepsilon}(y + \frac{z}{\varepsilon})-V(0))\partial_ju_{0}|^2dy\\
&&= \int_{B_{\rho}}|(V(\varepsilon y + z)-V(0))\partial_ju_{0}|^2dy+\int_{\mathbf R^n\setminus B_{\rho}}|(V(\varepsilon y + z)-V(0))\partial_ju_{0}|^2dy\\
&&\leq (V-V(0))^2_{\rho\varepsilon}(z)\int_{B_{\rho}}|\partial_ju_{0}|^2dy+C\int_{\mathbf R^n\setminus B_{\rho}}|\partial_ju_{0}|^2dy\\
&&\leq C\left((V-V(0))^2_{\rho\varepsilon}(z)+\int_{\mathbf R^n\setminus B_{\rho}}(\partial_ju_{0})^2dy\right)\\
&&\le C\left((V-V(0))^2_{\rho\varepsilon}(z)+\rho^{-n-4s}\right)
\end{eqnarray*}
for any $\rho>0$.

Then by Lemma \ref{l:contraction}, we have
\begin{equation}
\frac{|e_2+e_3|}{\varepsilon}\le \frac{C}{\varepsilon}(\rho^{-n-4s}+(V-1)^2_{\rho\varepsilon}(z)),\quad \text{for any $\rho>1$.}
\end{equation}
Since $0$ is a critical point of $V$, we obtain
\begin{eqnarray*}
&&|v_{\varepsilon}(z)-v_0(z)|=|\varepsilon^{-\nu}(s_{\varepsilon}(\varepsilon^{\nu} z)-v_0(\varepsilon^{\nu} z))|\\
&&\le \varepsilon^{-\nu}\left((C_1((|\varepsilon^{\nu}z|+\varepsilon \rho)^2+\rho^{-n-4s}+\varepsilon^{2}\rho^{-n-4s+2})+\frac{C}{\varepsilon}(\rho^{-n-4s}+(V-1)^2_{\rho\varepsilon}(\varepsilon^{\nu}z))\right)\\
&&\le C_1\varepsilon^{-\nu}\left((\varepsilon^{\nu}|z|+\varepsilon\rho)^2+\rho^{-n-4s}+\varepsilon^{2}\rho^{-n-4s+2}\right)
+C_3\varepsilon^{-\nu-1}\left((\varepsilon^{\nu}|z|+\varepsilon\rho)^4+\rho^{-n-4s}\right).
\end{eqnarray*}
Let $\frac{1}{3}<\nu<\min(\frac{3(n+4s)-4}{(n+4s)+4},\frac{2(n+4s)-2}{(n+4s)+2})$ (such kind of $\nu$ exists since $n\ge 1$ and $s > \frac{1}{2}$), and choose $\frac{\nu+1}{n+4s}<\lambda<\min(\frac{3-\nu}{4},\frac{2-\nu}{2})$. Let $\rho=\varepsilon^{-\lambda}$, we obtain
\begin{equation*}
\varepsilon^{-\nu}\left((\varepsilon^{\nu}|z|+\varepsilon\rho)^2+\rho^{-n-4s}+\varepsilon^{2}\rho^{-n-4s+2}\right)\to 0,
\end{equation*}
and
\begin{eqnarray*}
&&\varepsilon^{-\nu-1}\left((\varepsilon^{\nu}|z|+\varepsilon\rho)^4+\rho^{-n-4s}\right)\to 0.
\end{eqnarray*}
This completes the proof.
\end{proof}

\subsection{The proof of the main result.}

\begin{proof}[Proof of Theorem \ref{t:main}.]
By assumption $0$ is a non-degenerate critical point of $V$, so the image set of $\mathbb{S}: = \partial B_1$ by $v_0$ is diffeomorphic to $\mathbb{S}$. By Lemma \ref{l:uniform}, for sufficiently small $\varepsilon$, $v_{\varepsilon}(\mathbb{S})$ is also diffeomorphic to $\mathbb{S}$. Then there is a point $z_0\in B_1$ such that $v_{\varepsilon}(z_0)=0$. In fact, if not, then for all $z\in B_1$, $v_{\varepsilon}(z)\neq 0$. Let $\tilde v_{\varepsilon}(z)=\frac{v_{\varepsilon}(z)}{|v_{\varepsilon}(z)|}$. Since $v_{\varepsilon}(\mathbb{S})$ is diffeomorphic to $\mathbb{S}$, we have $\tilde v_{\varepsilon}(\mathbb{S})=\mathbb{S}$ and $\tilde v_{\varepsilon}(\overline{B_1})=\mathbb{S}$. By the Brouwer fixed point theorem, it is impossible.

Then, $z_\varepsilon:=\varepsilon^\nu z_0\in B_{\varepsilon^\nu}$ satisfies
$s_{\varepsilon}(z_\varepsilon)=0$.
On other hand, by Lemma \ref{l:contraction2}, we have $S_{\varepsilon}(u_{z,\varepsilon}+\phi_{z,\varepsilon})\in K_{z,\varepsilon}$. So, finally, we obtain $$S_{\varepsilon}(u_{z_\varepsilon,\varepsilon}+\phi_{z_\varepsilon,\varepsilon})=0.$$
Hence (\ref{e:main}) has a solution of form
\begin{equation}
v_\varepsilon (x) = u_{0}\left(\frac{x - z_\varepsilon}{\varepsilon}\right) + \phi_{z_\varepsilon,\varepsilon}
\end{equation}
with $z_\varepsilon \to 0$ and $\|\phi_{z_\varepsilon,\varepsilon}\|_{2s}\to 0$ as $\varepsilon \to 0$.
Recall that $\|\phi_{z_\varepsilon,\varepsilon}\|_{2s}\to 0$ implies that $\phi_{z_\varepsilon,\varepsilon}\to 0$ uniformly.
This completes the proof.
\end{proof}

%\newcommand{\Toappear}{to appear in}

%% \bibliography{mrabbrev,mlabbr2003-0,papers2005,books2005,localbib}
\bibliography{mrabbrev,mlabbr2003-0,localbib}

\begin{thebibliography}{10}

\bibitem{AdamsSobolevSpace}
Robert~A. Adams.
\newblock {\em Sobolev spaces}.
\newblock Academic Press [A subsidiary of Harcourt Brace Jovanovich,
  Publishers], New York-London, 1975.
\newblock Pure and Applied Mathematics, Vol. 65.

\bibitem{AlbertBouchittSeppecher98}
Giovanni Alberti, Guy Bouchitt{\'e}, and Pierre Seppecher.
\newblock Phase transition with the line-tension effect.
\newblock {\em Arch. Rational Mech. Anal.}, 144(1):1--46, 1998.

\bibitem{AMS01}
A.~Ambrosetti, A.~Malchiodi, and S.~Secchi.
\newblock Multiplicity results for some nonlinear {S}chr\"odinger equations
  with potentials.
\newblock {\em Arch. Ration. Mech. Anal.}, 159(3):253--271, 2001.

\bibitem{AmbrosettiMalchiodiNi2003}
Antonio Ambrosetti, Andrea Malchiodi, and Wei-Ming Ni.
\newblock Singularly perturbed elliptic equations with symmetry: existence of
  solutions concentrating on spheres. {I}.
\newblock {\em Comm. Math. Phys.}, 235(3):427--466, 2003.

\bibitem{Amick&Toland91}
C.~J. Amick and J.~F. Toland.
\newblock Uniqueness and related analytic properties for the {B}enjamin-{O}no
  equation---a nonlinear {N}eumann problem in the plane.
\newblock {\em Acta Math.}, 167(1-2):107--126, 1991.

\bibitem{BahriLi1990}
Abbas Bahri and Yan~Yan Li.
\newblock On a min-max procedure for the existence of a positive solution for
  certain scalar field equations in {${\bf R}^N$}.
\newblock {\em Rev. Mat. Iberoamericana}, 6(1-2):1--15, 1990.

\bibitem{BahriLions1997}
Abbas Bahri and Pierre-Louis Lions.
\newblock On the existence of a positive solution of semilinear elliptic
  equations in unbounded domains.
\newblock {\em Ann. Inst. H. Poincar\'e Anal. Non Lin\'eaire}, 14(3):365--413,
  1997.

\bibitem{Bates2006}
Peter~W. Bates.
\newblock On some nonlocal evolution equations arising in materials science.
\newblock In {\em Nonlinear dynamics and evolution equations}, volume~48 of
  {\em Fields Inst. Commun.}, pages 13--52. Amer. Math. Soc., Providence, RI,
  2006.

\bibitem{BilerKarchWoyczyski2001}
Piotr Biler, Grzegorz Karch, and Wojbor~A. Woyczy{\'n}ski.
\newblock Critical nonlinearity exponent and self-similar asymptotics for
  {L}\'evy conservation laws.
\newblock {\em Ann. Inst. H. Poincar\'e Anal. Non Lin\'eaire}, 18(5):613--637,
  2001.

\bibitem{ByeonWangARMA2002}
Jaeyoung Byeon and Zhi-Qiang Wang.
\newblock Standing waves with a critical frequency for nonlinear
  {S}chr\"odinger equations.
\newblock {\em Arch. Ration. Mech. Anal.}, 165(4):295--316, 2002.

\bibitem{CabreMorales2005}
Xavier Cabr{\'e} and Joan Sol{\`a}-Morales.
\newblock Layer solutions in a half-space for boundary reactions.
\newblock {\em Comm. Pure Appl. Math.}, 58(12):1678--1732, 2005.

\bibitem{CaffarelliMelletSire2012}
L.~Caffarelli, A.~Mellet, and Y.~Sire.
\newblock Traveling waves for a boundary reaction-diffusion equation.
\newblock {\em Adv. Math.}, 230(2):433--457, 2012.

\bibitem{CaffarelliRoquejoffreSavin}
L.~Caffarelli, J.-M. Roquejoffre, and O.~Savin.
\newblock Nonlocal minimal surfaces.
\newblock {\em Comm. Pure Appl. Math.}, 63(9):1111--1144, 2010.

\bibitem{Caffarelli&Silvestre07}
Luis Caffarelli and Luis Silvestre.
\newblock An extension problem related to the fractional {L}aplacian.
\newblock {\em Comm. Partial Differential Equations}, 32(7-9):1245--1260, 2007.

\bibitem{CaffarelliValdinoci2011}
Luis Caffarelli and Enrico Valdinoci.
\newblock Uniform estimates and limiting arguments for nonlocal minimal
  surfaces.
\newblock {\em Calc. Var. Partial Differential Equations}, 41(1-2):203--240,
  2011.

\bibitem{ChangKungching93}
Kung-ching Chang.
\newblock {\em Infinite-dimensional {M}orse theory and multiple solution
  problems}.
\newblock Progress in Nonlinear Differential Equations and their Applications,
  6. Birkh\"auser Boston Inc., Boston, MA, 1993.

\bibitem{ChangGonzalez2011}
Sun-Yung~Alice Chang and Mar{\'{\i}}a del~Mar Gonz{\'a}lez.
\newblock Fractional {L}aplacian in conformal geometry.
\newblock {\em Adv. Math.}, 226(2):1410--1432, 2011.

\bibitem{ColtonKress1998}
David Colton and Rainer Kress.
\newblock {\em Inverse acoustic and electromagnetic scattering theory},
  volume~93 of {\em Applied Mathematical Sciences}.
\newblock Springer-Verlag, Berlin, second edition, 1998.

\bibitem{ContTankov04}
Rama Cont and Peter Tankov.
\newblock {\em Financial modelling with jump processes}.
\newblock Chapman \& Hall/CRC Financial Mathematics Series. Chapman \&
  Hall/CRC, Boca Raton, FL, 2004.

\bibitem{Cordoba1998}
Diego Cordoba.
\newblock Nonexistence of simple hyperbolic blow-up for the quasi-geostrophic
  equation.
\newblock {\em Ann. of Math. (2)}, 148(3):1135--1152, 1998.

\bibitem{CraigSulem1992}
W.~Craig, C.~Sulem, and P.-L. Sulem.
\newblock Nonlinear modulation of gravity waves: a rigorous approach.
\newblock {\em Nonlinearity}, 5(2):497--522, 1992.

\bibitem{CraigGroves1994}
Walter Craig and Mark~D. Groves.
\newblock Hamiltonian long-wave approximations to the water-wave problem.
\newblock {\em Wave Motion}, 19(4):367--389, 1994.

\bibitem{CraigNicholls2000}
Walter Craig and David~P. Nicholls.
\newblock Travelling two and three dimensional capillary gravity water waves.
\newblock {\em SIAM J. Math. Anal.}, 32(2):323--359, 2000.

\bibitem{CraigSchanzSulem1997}
Walter Craig, Ulrich Schanz, and Catherine Sulem.
\newblock The modulational regime of three-dimensional water waves and the
  {D}avey-{S}tewartson system.
\newblock {\em Ann. Inst. H. Poincar\'e Anal. Non Lin\'eaire}, 14(5):615--667,
  1997.

\bibitem{CraigWorfolk1995}
Walter Craig and Patrick~A. Worfolk.
\newblock An integrable normal form for water waves in infinite depth.
\newblock {\em Phys. D}, 84(3-4):513--531, 1995.

\bibitem{DelallavePanayotaros1996}
R.~de~la Llave and P.~Panayotaros.
\newblock Gravity waves on the surface of the sphere.
\newblock {\em J. Nonlinear Sci.}, 6(2):147--167, 1996.

\bibitem{DellallaveValdinoci2009}
Rafael de~la Llave and Enrico Valdinoci.
\newblock Symmetry for a {D}irichlet-{N}eumann problem arising in water waves.
\newblock {\em Math. Res. Lett.}, 16(5):909--918, 2009.

\bibitem{DelpinoFelmer1996}
Manuel del Pino and Patricio~L. Felmer.
\newblock Local mountain passes for semilinear elliptic problems in unbounded
  domains.
\newblock {\em Calc. Var. Partial Differential Equations}, 4(2):121--137, 1996.

\bibitem{DelpinoWeijunchengCPAM}
Manuel del Pino, Michal Kowalczyk, and Jun-Cheng Wei.
\newblock Concentration on curves for nonlinear {S}chr\"odinger equations.
\newblock {\em Comm. Pure Appl. Math.}, 60(1):113--146, 2007.

\bibitem{NezzaPalatucci&Valdinoci}
Eleonora Di~Nezza, Giampiero Palatucci, and Enrico Valdinoci.
\newblock Hitchhiker's guide to the fractional {S}obolev spaces.
\newblock {\em Bull. Sci. Math.}, 136(5):521--573, 2012.

\bibitem{DuistermaatGuillemin1975}
J.~J. Duistermaat and V.~W. Guillemin.
\newblock The spectrum of positive elliptic operators and periodic
  bicharacteristics.
\newblock {\em Invent. Math.}, 29(1):39--79, 1975.

\bibitem{DuvautLions76}
G.~Duvaut and J.-L. Lions.
\newblock {\em Inequalities in mechanics and physics}.
\newblock Springer-Verlag, Berlin, 1976.
\newblock Translated from the French by C. W. John, Grundlehren der
  Mathematischen Wissenschaften, 219.

\bibitem{FarinaValdinici2011}
Alberto Farina and Enrico Valdinoci.
\newblock Rigidity results for elliptic {PDE}s with uniform limits: an abstract
  framework with applications.
\newblock {\em Indiana Univ. Math. J.}, 60(1):121--141, 2011.

\bibitem{FeffermanDelallave1986}
C.~Fefferman and R.~de~la Llave.
\newblock Relativistic stability of matter. {I}.
\newblock {\em Rev. Mat. Iberoamericana}, 2(1-2):119--213, 1986.

\bibitem{FeynmanHibbs2010}
Richard~P. Feynman and Albert~R. Hibbs.
\newblock {\em Quantum mechanics and path integrals}.
\newblock Dover Publications Inc., Mineola, NY, emended edition, 2010.
\newblock Emended and with a preface by Daniel F. Styer.

\bibitem{Floer86}
Andreas Floer and Alan Weinstein.
\newblock Nonspreading wave packets for the cubic {S}chr\"odinger equation with
  a bounded potential.
\newblock {\em J. Funct. Anal.}, 69(3):397--408, 1986.

\bibitem{Frank&LenzmannActaMath}
Rupert Frank and Enno Lenzmann.
\newblock Uniqueness and nondegeneracy of ground states for $(-{\Delta})^s {Q}
  + {Q} - {Q}^{\alpha+1} = 0$ in ${R}$.
\newblock {\em Acta Math., to appear}.

\bibitem{FrankLenzmann&Silvestre}
Rupert Frank, Enno Lenzmann, and Luis Silvestre.
\newblock Uniqueness of radial solutions for the fractional {L}aplacian.
\newblock {\em Submitted}.

\bibitem{GachterGrote2003}
G{\"u}nter~K. G{\"a}chter and Marcus~J. Grote.
\newblock Dirichlet-to-{N}eumann map for three-dimensional elastic waves.
\newblock {\em Wave Motion}, 37(3):293--311, 2003.

\bibitem{GarroniPalatucci2006}
Adriana Garroni and Giampiero Palatucci.
\newblock A singular perturbation result with a fractional norm.
\newblock In {\em Variational problems in materials science}, volume~68 of {\em
  Progr. Nonlinear Differential Equations Appl.}, pages 111--126. Birkh\"auser,
  Basel, 2006.

\bibitem{GonzalezMonneau2012}
Mar{\'{\i}}a del~Mar Gonz{\'a}lez and Regis Monneau.
\newblock Slow motion of particle systems as a limit of a reaction-diffusion
  equation with half-{L}aplacian in dimension one.
\newblock {\em Discrete Contin. Dyn. Syst.}, 32(4):1255--1286, 2012.

\bibitem{Grossi2002}
Massimo Grossi.
\newblock On the number of single-peak solutions of the nonlinear
  {S}chr\"odinger equation.
\newblock {\em Ann. Inst. H. Poincar\'e Anal. Non Lin\'eaire}, 19(3):261--280,
  2002.

\bibitem{GroteKirsch2004}
Marcus~J. Grote and Christoph Kirsch.
\newblock Dirichlet-to-{N}eumann boundary conditions for multiple scattering
  problems.
\newblock {\em J. Comput. Phys.}, 201(2):630--650, 2004.

\bibitem{HallReginatto2002}
Michael J.~W. Hall and Marcel Reginatto.
\newblock Schr\"odinger equation from an exact uncertainty principle.
\newblock {\em J. Phys. A}, 35(14):3289--3303, 2002.

\bibitem{HuNicholls2005}
Bei Hu and David~P. Nicholls.
\newblock Analyticity of {D}irichlet-{N}eumann operators on {H}\"older and
  {L}ipschitz domains.
\newblock {\em SIAM J. Math. Anal.}, 37(1):302--320 (electronic), 2005.

\bibitem{KenigMartelRobbiano2011}
C.~E. Kenig, Y.~Martel, and L.~Robbiano.
\newblock Local well-posedness and blow-up in the energy space for a class of
  {$L^2$} critical dispersion generalized {B}enjamin-{O}no equations.
\newblock {\em Ann. Inst. H. Poincar\'e Anal. Non Lin\'eaire}, 28(6):853--887,
  2011.

\bibitem{Kurzke2006}
Matthias Kurzke.
\newblock A nonlocal singular perturbation problem with periodic well
  potential.
\newblock {\em ESAIM Control Optim. Calc. Var.}, 12(1):52--63 (electronic),
  2006.

\bibitem{KwongARMA1989}
Man~Kam Kwong.
\newblock Uniqueness of positive solutions of {$\Delta u-u+u^p=0$} in {${\bf
  R}^n$}.
\newblock {\em Arch. Rational Mech. Anal.}, 105(3):243--266, 1989.

\bibitem{Laskin2000b}
Nick Laskin.
\newblock Fractals and quantum mechanics.
\newblock {\em Chaos}, 10(4):780--790, 2000.

\bibitem{Laskin2000a}
Nick Laskin.
\newblock Fractional market dynamics.
\newblock {\em Phys. A}, 287(3-4):482--492, 2000.
\newblock Economic dynamics from the physics point of view (Bad Honnef, 2000).

\bibitem{Laskin2002}
Nick Laskin.
\newblock Fractional {S}chr\"odinger equation.
\newblock {\em Phys. Rev. E (3)}, 66(5):056108, 7, 2002.

\bibitem{Laskin2000c}
Nikolai Laskin.
\newblock Fractional quantum mechanics and {L}\'evy path integrals.
\newblock {\em Phys. Lett. A}, 268(4-6):298--305, 2000.

\bibitem{LiYanyan1997}
YanYan Li.
\newblock On a singularly perturbed elliptic equation.
\newblock {\em Adv. Differential Equations}, 2(6):955--980, 1997.

\bibitem{MajdaTabak1996}
Andrew~J. Majda and Esteban~G. Tabak.
\newblock A two-dimensional model for quasigeostrophic flow: comparison with
  the two-dimensional {E}uler flow.
\newblock {\em Phys. D}, 98(2-4):515--522, 1996.
\newblock Nonlinear phenomena in ocean dynamics (Los Alamos, NM, 1995).

\bibitem{Mandelbrot1982}
Benoit~B. Mandelbrot.
\newblock {\em The fractal geometry of nature}.
\newblock W. H. Freeman and Co., San Francisco, Calif., 1982.
\newblock Schriftenreihe f{\"u}r den Referenten. [Series for the Referee].

\bibitem{MetzlerKlafter2000}
Ralf Metzler and Joseph Klafter.
\newblock The random walk's guide to anomalous diffusion: a fractional dynamics
  approach.
\newblock {\em Phys. Rep.}, 339(1):77, 2000.

\bibitem{MilakisSilvestre08}
Emmanouil Milakis and Luis Silvestre.
\newblock Regularity for the nonlinear {S}ignorini problem.
\newblock {\em Adv. Math.}, 217(3):1301--1312, 2008.

\bibitem{NaumkinShishmar1994}
P.~I. Naumkin and I.~A. Shishmar{\"e}v.
\newblock {\em Nonlinear nonlocal equations in the theory of waves}, volume 133
  of {\em Translations of Mathematical Monographs}.
\newblock American Mathematical Society, Providence, RI, 1994.
\newblock Translated from the Russian manuscript by Boris Gommerstadt.

\bibitem{Nelson1985}
Edward Nelson.
\newblock {\em Quantum fluctuations}.
\newblock Princeton Series in Physics. Princeton University Press, Princeton,
  NJ, 1985.

\bibitem{NichollsTaber2008}
David~P. Nicholls and Mark Taber.
\newblock Joint analyticity and analytic continuation of {D}irichlet-{N}eumann
  operators on doubly perturbed domains.
\newblock {\em J. Math. Fluid Mech.}, 10(2):238--271, 2008.

\bibitem{SavinValdinicci2009}
Ovidiu Savin and Enrico Valdinoci.
\newblock Elliptic {PDE}s with fibered nonlinearities.
\newblock {\em J. Geom. Anal.}, 19(2):420--432, 2009.

\bibitem{Secchi2013}
Simone Secchi.
\newblock Ground state solutions for nonlinear fractional {S}chr\"odinger
  equations in ${R}^n$.
\newblock {\em arXiv:1208.2545}.

\bibitem{Shubin01}
M.~A. Shubin.
\newblock {\em Pseudodifferential operators and spectral theory}.
\newblock Springer-Verlag, Berlin, second edition, 2001.
\newblock Translated from the 1978 Russian original by Stig I. Andersson.

\bibitem{Silvestre07}
Luis Silvestre.
\newblock Regularity of the obstacle problem for a fractional power of the
  {L}aplace operator.
\newblock {\em Comm. Pure Appl. Math.}, 60(1):67--112, 2007.

\bibitem{SireValdinoci2009}
Yannick Sire and Enrico Valdinoci.
\newblock Fractional {L}aplacian phase transitions and boundary reactions: a
  geometric inequality and a symmetry result.
\newblock {\em J. Funct. Anal.}, 256(6):1842--1864, 2009.

\bibitem{stokerJJ1957}
J.~J. Stoker.
\newblock {\em Water waves: {T}he mathematical theory with applications}.
\newblock Pure and Applied Mathematics, Vol. IV. Interscience Publishers, Inc.,
  New York, 1957.

\bibitem{JFToland1997}
J.~F. Toland.
\newblock The {P}eierls-{N}abarro and {B}enjamin-{O}no equations.
\newblock {\em J. Funct. Anal.}, 145(1):136--150, 1997.

\bibitem{WeinsteinCPDE87}
Michael~I. Weinstein.
\newblock Existence and dynamic stability of solitary wave solutions of
  equations arising in long wave propagation.
\newblock {\em Comm. Partial Differential Equations}, 12(10):1133--1173, 1987.

\bibitem{whiteham1974}
G.~B. Whitham.
\newblock {\em Linear and nonlinear waves}.
\newblock Wiley-Interscience [John Wiley \& Sons], New York, 1974.
\newblock Pure and Applied Mathematics.

\end{thebibliography}
\bibliographystyle{plain}
\end{document}